\newtheorem{theorem}{Theorem}[section]
\theoremstyle{definition}
\newtheorem{definition}{Definition}[section]
\theoremstyle{definition}
\theoremstyle{definition}
\newtheorem{proposition}{Proposition}[section]
\theoremstyle{definition}
\newtheorem{corollary}{Corollary}[section]
\theoremstyle{remark}
\newtheorem{remark}{Remark}[section]
\theoremstyle{definition}
\newtheorem{lemma}{Lemma}[section]
\DeclareMathOperator{\naturalnumber}{\mathbb{N}}
\DeclareMathOperator{\real}{\mathbb{R}}
\DeclareMathOperator{\p}{\mathbb{P}}
\DeclareMathOperator{\E}{\mathbb{E}}
\DeclareMathOperator{\borel}{\mathcal{B}}
\DeclareMathOperator{\as}{\text{-a.s.}}
\DeclareMathOperator{\1}{\mathbbm{1}}
\newcommand{\filtration}[2]{\mathcal{F}^{#1}_{#2}}
\newcommand{\density}[3]{\frac{d\mu_{ {#1} }}{ d\mu_{#2} }\left({#3} \right)}
\newcommand{\D}[2]{\mathbb{D}\left( {#1}\| {#2}   \right)   }
\setlist[itemize]{leftmargin=0.4cm,labelindent=\parindent}
\begin{document}
	
	\setstretch{1.2}
	
	\title{Relative Entropy Convergence under Picard's Iteration for Stochastic Differential Equations}
	
	\author{Tsz Hin, Ng\\ The University of Hong Kong \\ \href{mailto:kennthin@hku.hk}{kennthin@hku.hk} \and Guangyue, Han \\ The University of Hong Kong\\  \href{mailto:ghan@hku.hk}{ghan@hku.hk} } 	
	
	\date{\today}
	\maketitle
	
	\begin{abstract}
		For a family of stochastic differential equations, we investigate the asymptotic behaviors of its corresponding Picard's iteration, establishing convergence results in terms of relative entropy. Our convergence results complement the conventional ones in the $L^2$ and almost sure sense, revealing some previously unexplored aspects of the stochastic differential equations under consideration. For example, in combination with Pinsker's inequality, one of our results readily yields the convergence under Picard's iteration in the total variation sense, which does not seem to directly follow from any other known results. Moreover, our results promise possible further applications of SDEs in related disciplines. As an example of such applications, we establish the convergence of the corresponding mutual information sequence under Picard's iteration for a continuous-time Gaussian channel with feedback, which may pave way for effective computation of the mutual information of such a channel, a long open problem in information theory.
	\end{abstract}
	
	\textit{Keywords:} relative entropy, mutual information, Girsanov's theorem, stochastic differential equations, Picard's iteration.
	
	\section{Introduction}
	
	Let $T > 0$, and let $C[0, T]$ denote the space of all real-valued continuous functions over the interval $[0, T]$. Let $f$ be a {\em non-anticipative} functional from $[0,T] \times C[0, T] \times C[0, T]$ to $\real$ in the sense that, roughly speaking, with $t$ interpreted as the present time, the value of $f(t, \varphi, \psi)$ only causally depends on the ``history'' of $\varphi$ and $\psi$ (see the rigorous definition in Section~\ref{rigorous}). And let $g$ be a non-anticipative functional from $[0, T] \times C[0, T]$ to $\real$. In this paper, we are concerned with a stochastic differential equation (SDE) taking the following form:
	\begin{equation} \label{eq_sde}
		dY(t) = f(t,\xi,Y)dt + g(t,Y)dW(t), \ 0\leq t\leq T,
	\end{equation}
	with the initial condition $Y(0) = 0$, where $W=\{W(t); 0\leq t\leq T \} $ is a standard Brownian motion and $\xi = \{\xi(t); 0\leq t\leq T \}$ is a stochastic process independent of $W$ with $\E\left[ \sup_{0\leq t\leq T} |\xi(t)|^2 \right]<\infty$.
	
	For the special case that there is no actual dependence of $f$ on the second parameter $\varphi$, it is well known (see, e.g.,~\cite{BMSC, Mao}) that a convergence analysis of the corresponding Picard's iteration can establish the existence and uniqueness of the strong solution to the equation (\ref{eq_sde}), provided that $f$ and $g$ satisfy the conventional uniform Lipschitz and linear growth conditions. It turns out that, for the general case (that $f$ may actually depends on $\varphi$), the aforementioned conditions and convergence analysis can be adapted to achieve the same objective. More precisely, assume that $f$ and $g$ satisfy the following conditions:
	\begin{enumerate}
		\item[\textbf{(L)}]  \label{L} (Uniform Lipschitz) There exists $K>0$ such that for any $t\in[0,T]$ and any $\varphi,\psi,\tilde{\psi} \in C[0, T]$,
		\begin{equation}
			|f(t, \varphi, \psi)-f(t, \varphi, \tilde{\psi})|^2 + |g(t,\psi)-g(t,\tilde{\psi})|^2 \leq K\sup_{0\leq s\leq t}|\psi(s)-\tilde{\psi}(s)|^2.
		\end{equation}
		\item [\textbf{(G)}] \label{G}  (Linear Growth) There exists $L>0$ such that for any $t \in [0,T]$ and $\varphi,\psi \in \real$,
		\begin{equation}
			|f(t,\varphi,\psi)|^2 + |g(t,\psi)|^2 \leq L\left(1+\sup_{0\leq s\leq t}|\varphi(s)|^2 +\sup_{0 \leq s \leq t}|\psi(s)|^2\right).
		\end{equation}
	\end{enumerate}
	And consider the sequence $\{Y^{(n)}\}=\{Y^{(n)}(t); 0 \leq t \leq T\}_{n=0}^{\infty}$ recursively defined by
	\begin{equation} \label{eq_picard}
		Y^{(n)}(t)=  \int_0^t f\left(s, \xi, Y^{(n-1)}\right) ds + \int_0^t g\left(s,Y^{(n-1)}\right)dW(s), \mbox{ for $n \in \naturalnumber$},
	\end{equation}
	starting with $Y^{(0)}(t) \equiv 0$. Then, as elaborated in Theorem~\ref{thm_existence_uniqueness}, the sequence $\left\{ Y^{(n)} \right\}$ converges in the $L^2$ and almost-sure sense, which yields the unique strong solution $Y=\{Y(t); 0 \leq t \leq T \}$ to the equation (\ref{eq_sde}).
	
	In this paper, we will further characterize the convergence behaviors of $\left\{ Y^{(n)} \right\}$ in terms of the so-called {\em relative entropy}, a.k.a. {\em Kullback-Leibler divergence}. Roughly speaking, the relative entropy between two probability measures $\mu$ and $\nu$, denoted by $\D{\mu}{\nu}$, is a ``distance-like'' function measuring how ``close'' $\mu$ and $\nu$ are; and our main results show that as $n$ tends to infinity, the distribution of $\{Y^{(n)}\}$ converges to that of $Y$ with respect to certain measures in terms of relative entropy. More precisely, let $B=\{B(t);0\leq t\leq T \}$ be the strong solution to the following SDE:
	\begin{equation}
		\label{eq_B}
		dB(t) = g(t,B)dW(t) , \ 0\leq t\leq T,
	\end{equation}
	with the initial condition $B(0) = 0$. And let $\mu_{Y^{(n)}}$, $\mu_Y$ and $\mu_B$ denote the probability measures on the space of $C[0, T]$ induced by $Y^{(n)}$, $Y$ and $B$, respectively. Then, our main result can be stated as follows:
	\begin{theorem} \label{thm1}
		Suppose that $f$ and $g$ satisfy (\textbf{L}) and (\textbf{G}) and that $g$ satisfies the following regularity condition:
		\begin{enumerate}
			\item [(\textbf{R})] $\sup_{(t,\varphi)\in[0,T]\times C[0, T]}|g^{-1}(t,\psi)| <\infty.$
		\end{enumerate}
		Then, we have
		\begin{enumerate}
			\item[1)] $\lim_{n\to\infty} \D{\mu_{Y^{(n)}}}{\mu_Y}=0$;
			\item[2)] $\lim_{n\to\infty} \D{\mu_{Y^{(n)}}}{\mu_B} = \D{\mu_{Y}}{\mu_B}.$
		\end{enumerate}
	\end{theorem}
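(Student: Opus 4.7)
The plan is to invoke Girsanov's theorem under condition (\textbf{R}) and to combine it with the $L^2$ convergence $\sup_{0\le t\le T}\E[|Y^{(n)}(t)-Y(t)|^2]\to 0$ supplied by Theorem~\ref{thm_existence_uniqueness}. I would first condition on the auxiliary process $\xi$: since $\xi$ is independent of $W$ and has the same marginal in every construction, the chain rule for relative entropy together with data processing (project the joint law of $(\xi,Y^{(n)})$ onto its second coordinate) yields
\begin{equation*}
\D{\mu_{Y^{(n)}}}{\mu_Y} \;\le\; \int \D{\mu_{Y^{(n)}|\xi}}{\mu_{Y|\xi}}\, d\mu_{\xi},
\end{equation*}
reducing the problem to controlling the inner divergence uniformly in $\xi$ and then invoking dominated convergence.

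With $\xi$ treated as a parameter, I would route both $\mu_{Y|\xi}$ and $\mu_{Y^{(n)}|\xi}$ through the common reference $\mu_B$. Condition (\textbf{R}) makes $g^{-1}(s,\cdot) f(s,\xi,\cdot)$ bounded along the relevant paths; combined with (\textbf{G}) and the uniform second-moment bounds on the iterates carried over from the Picard analysis, Novikov's condition holds and Girsanov supplies the Doléans--Dade exponentials for $d\mu_{Y|\xi}/d\mu_B$ and $d\mu_{Y^{(n)}|\xi}/d\mu_B$. Taking the log-ratio and evaluating, the martingale piece of the resulting Itô expression integrates to zero and the conditional KL reduces to a quantity essentially of the shape
\begin{equation*}
\frac{1}{2}\,\E\!\left[\int_0^T g^{-2}(s,\cdot)\bigl(f(s,\xi,Y)-f(s,\xi,Y^{(n-1)})\bigr)^2\, ds\right].
\end{equation*}
The Lipschitz condition (\textbf{L}) dominates this integrand by a constant multiple of $\sup_{0\le s\le T}|Y(s)-Y^{(n-1)}(s)|^2$, and the $L^2$ convergence of $Y^{(n-1)}\to Y$ then closes part~(1).

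For part~(2), since all three measures are equivalent via the Girsanov densities, the algebraic identity
\begin{equation*}
\D{\mu_{Y^{(n)}}}{\mu_B} - \D{\mu_Y}{\mu_B} \;=\; \D{\mu_{Y^{(n)}}}{\mu_Y} \;+\; \int \log\frac{d\mu_Y}{d\mu_B}\, d\bigl(\mu_{Y^{(n)}}-\mu_Y\bigr)
\end{equation*}
is available. The first term vanishes by part~(1); the second vanishes by combining the weak convergence $\mu_{Y^{(n)}}\Rightarrow\mu_Y$ (immediate from the $L^2$/a.s.\ convergence of the iterates) with uniform integrability of $\log(d\mu_Y/d\mu_B)$ against $\{\mu_{Y^{(n)}}\}$, itself a consequence of the explicit Girsanov form of the density and the moment bounds in (\textbf{G}).

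The main obstacle is the second step: because the diffusion coefficients of $Y^{(n)}$ and $Y$ are $g(s,Y^{(n-1)})$ and $g(s,Y)$ respectively, there is no direct Girsanov comparison between $\mu_{Y^{(n)}}$ and $\mu_Y$, and the comparison must be routed through $\mu_B$. Carrying out that detour cleanly---verifying Novikov for the iterates uniformly in $n$, and upgrading $L^2$ convergence of the processes to $L^1$ convergence of the log-densities so that the expectation of the stochastic-integral term actually drops out---is where the technical work concentrates.
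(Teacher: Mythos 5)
Your treatment of part 1) matches the paper's in spirit: both condition on $\xi$ (the paper via a log-sum/convexity lemma, you via chain rule plus data processing---equivalent reformulations), then compare the conditional laws via Girsanov densities and close with the $L^2$ Picard estimates. The paper reduces to $g\equiv 1$ at the outset by a measure change through $\mu_B$; you keep $g$ and insert $g^{-2}$ factors, a cosmetic difference. One imprecision worth flagging: conditional on $\xi=x$, the process $Y^{(n)}$ is driven by $f(t,x,Y^{(n-1)})$ rather than by a functional of its own path, so the density $d\mu_{Y^{(n)}\mid\xi=x}/d\mu_W$ involves the conditional expectation $\E\bigl[f(t,x,Y^{(n-1)})\mid\mathcal{F}^{Y^{(n)}}_t\bigr]$ (the paper's $\Phi^{(n)}$ and $\beta^{(n)}_x$), not the drift itself, and the comparison drift the paper ends up with is $f(t,x,Y^{(n)})$, not $f(t,x,Y)$. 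Neither issue breaks the estimate---Jensen's inequality lets you remove the conditional expectation, and the triangle inequality relates the two differences---but your displayed formula for the conditional KL is not literally what appears after Girsanov.

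For part 2) there is a genuine gap. Your identity
\begin{equation*}
\D{\mu_{Y^{(n)}}}{\mu_B} - \D{\mu_Y}{\mu_B} \;=\; \D{\mu_{Y^{(n)}}}{\mu_Y} \;+\; \int \log\frac{d\mu_Y}{d\mu_B}\, d\bigl(\mu_{Y^{(n)}}-\mu_Y\bigr)
\end{equation*}
is exactly the paper's starting point, but your argument that the second term vanishes---weak convergence $\mu_{Y^{(n)}}\Rightarrow\mu_Y$ plus uniform integrability of $\log(d\mu_Y/d\mu_B)$, the latter ``a consequence of the explicit Girsanov form and the moment bounds in (\textbf{G})''---does not hold up. First, $\log(d\mu_Y/d\mu_B)$ contains a stochastic integral and is not a sup-norm-continuous functional of the path, so weak convergence (even with UI) gives no convergence of integrals against it; the paper instead exploits the a.s.\ and $L^2$ convergence of $Y^{(n)}\to Y$ on a common probability space to compare $\E[\log\frac{d\mu_Y}{d\mu_W}(Y^{(n)})]$ with $\E[\log\frac{d\mu_Y}{d\mu_W}(Y)]$ directly. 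Second, and more fundamentally, (\textbf{G}) alone does not supply the needed integrability: the Girsanov exponential $\exp(\int_0^T f\,dY - \tfrac{1}{2}\int_0^T f^2\,dt)$ can have heavy tails under mere linear growth. The paper's Lemma~\ref{pp_ui} proves the $L^p$ bounds on the densities only under the additional boundedness hypothesis (\textbf{B}), and then an entire second step is devoted to removing (\textbf{B}) by localization---stopping times $\tau_m,\sigma_m$, a truncated drift $f_{(m)}$, truncated iterates $Y^{(n)}_{(m)}$, indistinguishability relations, and splitting all expectations over $\{\sigma_m=T\}$ and $\{\sigma_m<T\}$. That truncation step is where most of the technical effort in the proof of 2) lies, and it is entirely absent from your outline; as written, the argument does not close under the stated hypotheses (\textbf{L}), (\textbf{G}), (\textbf{R}).
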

	
	Though often intuited as a way of measuring the distance between two probability distributions, relative entropy does not qualify as a metric due to its asymmetry (with respect to its two parameters) and failure to obey the triangle inequality. Nevertheless, relative entropy is intimately related to many other metrics such as total variation distance, Fisher information divergence, Wasserstein distance and so on; see, e.g.,~\cite{Villani2003, Villani2009} and references therein. This, together with many other desirable properties, explains its widespread use in diverse disciplines including probability theory~\cite{Oliver}, statistics~\cite{Kullback, Tsybakov}, statistical physics~\cite{Tribus, Hobson}, machine learning and neural science~\cite{Bishop, ChenJiaWang, cliffs, MacKay}, and information theory~\cite{CoverThomas, ITCS}. Complementing the convergence of Picard's iteration in the conventional $L^2$ and almost sure sense, relative entropy convergence as characterized in Theorem~\ref{thm1} may reveal aspects of SDEs unexplored as yet, which hold out the promise of new applications of SDEs in some uncharted research territories.

Roughly speaking, the well-known Pinsker's inequality~\cite{CsiszarKorner} states that the relative entropy between two probability measures dominates their total variation distance. As an example of the possible applications of our results, we note that the statement 1) of Theorem~\ref{thm1}, in combination with Pinsker's inequality, immediately leads to the following corollary, which does not seem to directly follow from any other known results.
\begin{corollary}
	Suppose $f$ and $g$ satisfy $(\bf{L})$, $(\bf{G})$ and $(\bf{R})$. Then, $\mu_{Y^{(n)}}$ converges to $\mu_Y$ in total variation distance.
\end{corollary}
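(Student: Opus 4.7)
The plan is to derive the corollary as a direct consequence of statement 1) of Theorem~\ref{thm1} combined with Pinsker's inequality, exactly as the preceding discussion suggests. Since the Kullback--Leibler divergence $\D{\mu_{Y^{(n)}}}{\mu_Y}$ vanishes as $n \to \infty$, the only remaining task is to translate this convergence into convergence in total variation.

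More concretely, I would first invoke Theorem~\ref{thm1}(1), which under the hypotheses (\textbf{L}), (\textbf{G}) and (\textbf{R}) yields
\[
\lim_{n\to\infty} \D{\mu_{Y^{(n)}}}{\mu_Y} = 0.
\]
Next, I would recall Pinsker's inequality in the form
\[
\|\mu - \nu\|_{TV} \leq \sqrt{\tfrac{1}{2}\, \D{\mu}{\nu}},
\]
valid for any pair of probability measures on a common measurable space, and apply it with $\mu = \mu_{Y^{(n)}}$ and $\nu = \mu_Y$ on the Polish space $C[0,T]$ (equipped with its Borel $\sigma$-algebra). Taking $n\to\infty$ on both sides forces $\|\mu_{Y^{(n)}} - \mu_Y\|_{TV} \to 0$, which is exactly the claimed convergence.

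There is essentially no obstacle here: all of the analytical work is already absorbed into Theorem~\ref{thm1}(1), and Pinsker's inequality holds in full generality for probability measures on arbitrary measurable spaces, so no measure-theoretic subtleties arise from working on the path space $C[0,T]$. If anything, the only point worth a brief remark is that the total variation distance on $C[0,T]$ is the supremum of $|\mu_{Y^{(n)}}(A) - \mu_Y(A)|$ over Borel sets $A$, so the corollary asserts uniform convergence of the laws of $Y^{(n)}$ on events, strictly stronger than weak convergence but a clean consequence of the entropic convergence already established.
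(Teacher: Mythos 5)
Your argument is exactly the one the paper intends: the corollary is presented as an immediate consequence of Theorem~\ref{thm1}(1) together with Pinsker's inequality, and you correctly note that Pinsker's inequality applies without further ado to probability measures on the Borel $\sigma$-algebra of $C[0,T]$. There is nothing to add.
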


For another example, the equation (\ref{eq_sde}) is often used to model a continuous-time Gaussian channel with feedback in information theory. As a corollary of the statement 2) of Theorem~\ref{thm1}, we establish the convergence of the mutual information of such a channel under Picard's iteration in Theorem~\ref{pp_mi}. Here, we note that this convergence result requires rather mild assumptions and more importantly feature a finer characterization of the convergence behavior, and thereby may pave way for effective computation of the mutual information of a continuous-time Gaussian channel with feedback, which is a long open problem in information theory.

		The remainder of the paper is organized as follows. We start with Section~\ref{sec_prelim}, which will introduce some basic notions and results that are instrumental in our proofs. The proof of our main result will then be presented in Section~\ref{Proof-Main-Result}. In Section~\ref{sec_proof}, we state and prove Theorem~\ref{pp_mi}, which exemplifies the possible applications of Theorem~\ref{thm1} in information theory.
	
	\section{Preliminaries} \label{sec_prelim}
	
	\subsection{Notations} \label{rigorous}
	
	We denote by $\naturalnumber$ and $\naturalnumber_0$ the sets of positive and non-negative integers respectively. For any $\varphi \in C[0, T]$, let $\|\varphi\|_T$ denote its sup-norm, that is, $\|\varphi\|_T := \sup_{0\leq t\leq T}|\varphi(t)|$. We will equip the space $C[0, T]$ with the filtration $\{\borel_t\}_{0\leq t\leq T}$, where $\borel_T$ denotes the Borel $\sigma$-algebra on the space $C[0, T]$ and $\borel_t = \pi_t^{-1}(\borel_T)$, where $\pi_t : C[0, T] \to C[0, T]$ is given by the map $(\pi_tx)(s) = x(t\wedge s)$. We say that a functional $f$ from $[0,T] \times C[0, T] \times C[0, T]$ to $\real$ is {\em non-anticipative} if $f$ is $\borel([0,T]) \otimes \borel_T \otimes \borel_T$-measurable and for each $t\in[0,T]$, $f(t,\cdot,\cdot)$ is $\borel_{t}\otimes\borel_{t}$-measurable, or equivalently, for any $\varphi, \psi \in C[0, T]$,
$$
f(t, \varphi, \psi)= f(t, \{\varphi(s); 0 \leq s \leq t\}, \{\psi(s); 0 \leq s \leq t\}).
$$
	
	We use $(\Omega,\mathcal{F},\p)$ to denote the underlying probability space equipped with a filtration $\mathcal{F}=\{\mathcal{F}_t \}_{0\leq t\leq T}$, which, as is typical in the theory of SDEs, satisfies the {\em usual conditions}~\cite{BMSC} and is rich enough to accommodate $\xi$ and $W$. For any $\mathcal{F}$-adapted process $X=\{X(t), \mathcal{F}_t;0\leq t\leq T\}$, we use $\mathcal{F}^X =\{\filtration{X}{t};0\leq t\leq T \}$ to denote the {\em augmented filtration} generated by $X$.
	
	For any two probability measures $\mu$ and $\nu$, we write $\mu\ll\nu$ to mean $\mu$ is absolutely continuous with respect to $\nu$ and $\mu\sim\nu$ if they are equivalent. For any two processes $X=\{X(t); 0 \leq t \leq T\}$ and $Y=\{X(t); 0 \leq t \leq T\}$, as mentioned before, we use $\mu_X, \mu_Y$ to denote the probability distributions on $\borel_T$ induced by $X, Y$, respectively; and  we write the Radon-Nikodym derivative of $\mu_Y$ with respect to $\mu_X$ as $d\mu_Y/d\mu_X$.

	Given two expressions $A_1, A_2$, we use $A_1 \lesssim_M A_2$ as a shorthand for the inequality $A \leq C_M B$ for some constant $C_M$ depending only on $M$.

	\subsection{The $L^2$ and Almost Sure Convergence of $\{Y^{(n)}\}$}
	
	The following theorem establishes the convergence of $\{Y^{(n)}\}$ in the $L^2$ and almost sure sense, and thereby the existence and uniqueness of the strong solution to (\ref{eq_sde}). The proof is a slight modification of the conventional argument and is included in Appendix~\ref{proof_thm_existence_uniqueness} for completeness. Here, we emphasize that the constants specified in the theorem will be used to characterize the rate of convergence in Theorem~\ref{pp_mi}.
	\begin{theorem} \label{thm_existence_uniqueness}
		Consider the equation (\ref{eq_sde}) satisfying the conditions (\textbf{L}) and (\textbf{G}). Then, we have
		\begin{enumerate}
			\item[1)] for all $n \in \naturalnumber_0$,
			\begin{equation} \label{eq_exist_unique_bound_picard}
				\E\left[\left\|Y^{(n)} \right\|_T^2\right] \leq k_1e^{k_2T} \quad \text{and} \quad  \E\left[\left\| Y^{(n+1)}-Y^{(n)} \right\|^2_T  \right] \leq c_1 \frac{(c_2T)^n}{n!},
			\end{equation}
			where $k_1 = 2LT(T+4)\left(1+\E\left[\|\xi\|_T^2\right] \right) $, $k_2 = 2L(T+4)$,  $c_1 =2TL(T+4) \left( 1+\E\left[\|\xi\|_T^2\right]\right)  $ and $c_2 =2K(T+4)$;
			\item[2)] there exists a unique strong solution $Y=\{Y(t);0\leq t\leq T \}$ to (\ref{eq_sde}) with
			\begin{equation} \label{eq_square_bound_sq}
				\lim_{n\to\infty}\left\|Y^{(n)}-Y  \right\|^2_T = 0 \mbox { and } \E\left[\left\| Y^{(n)}-Y \right\|^2_T  \right] \leq c_3\frac{(c_2T)^n}{n!} \stackrel{n \to \infty}{\longrightarrow} 0,
			\end{equation}
			where $c_3 =k_1e^{k_2T} $. So, the solution $Y$ is the pointwise and $L^2$-limit of the sequence $\left\{Y^{(n)} \right\}_{n=0}^\infty$.
		\end{enumerate}				
	\end{theorem}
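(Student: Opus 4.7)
The plan is the standard Picard iteration argument with careful bookkeeping of the stated constants. Set $\phi_n(t) := \E[\sup_{s \leq t} |Y^{(n)}(s)|^2]$. Starting from (\ref{eq_picard}), apply $(a+b)^2 \leq 2a^2 + 2b^2$ to split drift and diffusion, Cauchy--Schwarz on the drift integral, and Doob's $L^2$-maximal inequality (with constant $4$) combined with the It\^o isometry on the stochastic integral. Invoking (\textbf{G}) yields
\begin{equation*}
\phi_n(t) \leq 2L(T+4)\left(A\,t + \int_0^t \phi_{n-1}(s)\,ds\right), \qquad A := 1 + \E[\|\xi\|_T^2].
\end{equation*}
With $k_1 = 2LT(T+4)A$ and $k_2 = 2L(T+4)$, the identity $k_1 = k_2 A T$ closes the induction: if $\phi_{n-1}(t) \leq k_1 e^{k_2 t}$ on $[0,T]$, then $\phi_n(t) \leq k_2 A t + k_1(e^{k_2 t}-1) \leq k_1 e^{k_2 t}$, since $k_2 A t \leq k_2 A T = k_1$.

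For the increment bound, set $\psi_n(t) := \E[\sup_{s \leq t}|Y^{(n+1)}(s) - Y^{(n)}(s)|^2]$. Redoing the same manipulation on $Y^{(n+1)}-Y^{(n)}$, now invoking (\textbf{L}) in place of (\textbf{G}), gives
\begin{equation*}
\psi_n(t) \leq c_2 \int_0^t \psi_{n-1}(s)\,ds, \qquad c_2 = 2K(T+4),
\end{equation*}
together with the base case $\psi_0(t) \leq 2L(T+4)A\,t = (c_1/T)t$ coming from the estimate above applied to $Y^{(1)}$. Iterating yields $\psi_n(t) \leq (c_1/T)\,c_2^n\, t^{n+1}/(n+1)!$; evaluating at $t=T$ and bounding $1/(n+1)! \leq 1/n!$ gives the claimed $\psi_n(T) \leq c_1 (c_2 T)^n / n!$.

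For Part 2), the factorial decay makes $\sum_n \sqrt{\psi_n(T)}$ summable, so $(Y^{(n)})$ is Cauchy in $L^2(\Omega;C[0,T])$ and converges to some limit $Y$; Chebyshev plus Borel--Cantelli applied to $\{\|Y^{(n+1)}-Y^{(n)}\|_T > 2^{-n}\}$ upgrades this to a.s.\ uniform convergence on $[0,T]$. Passing to the limit in (\ref{eq_picard})---dominated convergence for the drift (using Lipschitz continuity of $f$ in its last argument and the uniform $L^2$ bound for $Y^{(n)}$) and the It\^o isometry for the diffusion---identifies $Y$ as a strong solution. For the explicit rate, let $\rho_n(t) := \E[\sup_{s \leq t}|Y(s)-Y^{(n)}(s)|^2]$; Fatou's lemma combined with Part 1) gives $\rho_0(t) \leq k_1 e^{k_2 T} = c_3$, and repeating the Lipschitz estimate behind the $\psi_n$ recursion yields $\rho_n(t) \leq c_2 \int_0^t \rho_{n-1}(s)\,ds$, whence $\rho_n(T) \leq c_3 (c_2 T)^n / n!$ by iteration; the same recursion applied to the difference of any two strong solutions, together with Gr\"onwall's inequality, gives uniqueness. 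No step is genuinely hard; the one subtle piece of bookkeeping is the calibration $k_1 = k_2 A T$ rather than the naive $k_1 = A$, which is precisely what makes the inductive step for $\phi_n$ close by allowing the linear-in-$t$ drift contribution $k_2 A t$ to be absorbed into the exponential envelope $k_1 e^{k_2 t}$ on $[0,T]$.
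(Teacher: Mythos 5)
Your proposal is correct and follows essentially the same approach as the paper's: split drift and diffusion, apply Cauchy--Schwarz to the drift and Doob's maximal inequality with the It\^o isometry to the stochastic integral, derive Gr\"onwall-type integral recursions from (\textbf{G}) and (\textbf{L}), and conclude via factorial decay, Borel--Cantelli, and Fatou. The only cosmetic difference is that you keep the $t$-linear term $k_2At$ in the recursion and close the induction against the envelope $k_1e^{k_2t}$ by observing $k_1 = k_2AT$, whereas the paper immediately dominates that term by the constant $k_1$ and iterates to a truncated exponential series; the two routes give the same bound.
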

	
	\subsection{Absolute Continuity between Relevant Measures} \label{ACRM}
	
	In this section, assuming $g \equiv 1$, we will discuss the absolute continuity between various measures and derive some formulas for the corresponding Radon-Nikodym derivatives which will be used in our proofs. For notational convenience, we may simply write $f\left(t,\xi, Y^{(n)} \right)$ as $f^{(n)}(t)$, and $f(t,\xi,Y)$ as $f(t)$.
	
	First of all, it is easy to see that the linear growth condition (\textbf{G}) guarantees that
	\begin{equation}
		\int_0^T |f(t,\varphi,\psi)|dt < \infty
	\end{equation}
	for any $\varphi,\psi \in C[0, T]$, and
	\begin{equation} \label{eq_cond_derivative1}
		\p\left( \int_0^T f^2\left(t,\xi,Y\right)dt <\infty \right)=1 \quad \text{and} \quad \p\left( \int_0^T \E^2\left[f\left(t,\xi,Y\right) \big| \filtration{Y}{t} \right] dt<\infty \right)=1.
	\end{equation}
	Hence, by~\cite[Theorem 7.14 and Lemma 7.7]{liptser}, we have
\begin{equation} \label{YsimW}
\mu_Y \sim \mu_W \mbox{  and  } \mu_{\xi,Y} \sim \mu_{\xi} \times \mu_W,
\end{equation}
    and moreover,
	\begin{equation}
		\label{eq_density_Y,W}
		\frac{d\mu_Y}{d\mu_W}(Y) = \exp \left(\int_0^T \E\left[f(t,\xi,Y) |\filtration{Y}{t} \right]dY(t) - \frac{1}{2} \int_0^T \E^2\left[f(t,\xi,Y) |\filtration{Y}{t} \right]dt    \right)
	\end{equation}
	and
	\begin{equation}
		\frac{d\mu_{\xi,Y}}{d(\mu_\xi\times\mu_W)}(\xi,Y) = \exp\left(\int_0^T f(t,\xi,Y)dY(t) -\frac{1}{2}\int_0^T f^2(t,\xi,Y)dt \right).
	\end{equation}
	
	Similarly, it follows from (\textbf{G}) and (\ref{eq_square_bound_sq}) that for any $n\in\naturalnumber_0$,
	\begin{equation}
		\p\left( \int_0^T f^2\left(t,\xi,Y^{(n)}\right)dt <\infty \right)=1 \quad \text{and} \quad \p\left( \int_0^T \E^2\left[f\left(t,\xi,Y^{(n)}dt\right) \big| \filtration{Y^{(n+1)}}{t} \right]dt <\infty \right)=1.
	\end{equation}
	Hence, by~\cite[Theorem 7.14]{liptser}, we have $\mu_{Y^{(n)}} \ll \mu_W$ for all $n\in\naturalnumber$, and furthermore,
	\begin{align} \label{eq_logderivative_uncondition}
		\hspace{-5mm} \frac{d\mu_{Y^{(n)}}}{d\mu_W}\left(Y^{(n)} \right) &= \exp\left(\int_0^T\alpha^{(n)}(t) dY^{(n)}(t) - \frac{1}{2}\int_0^T  \left(\alpha^{(n)}(t)\right)^2 dt \right) \nonumber \\
		&\hspace{-2cm} = \exp\left(\int_0^T \alpha^{(n)}(t) f^{(n-1)}(t) dW(t) + \int_0^T f^{(n-1)}(t) \alpha^{(n)}(t) dt  - \frac{1}{2}\int_0^T  \left(\alpha^{(n)}(t)\right)^2 dt  \right),
	\end{align}
	where $\alpha^{(n)}(t) := \E\left[f\left(t,\xi,Y^{(n-1)}\right) \big| \filtration{Y^{(n)}}{t}\right]$, and, as mentioned before, $f^{(n)}(t)$ is a shorthand notation for $f(t, \xi, Y^{(n)})$. The following proposition, whose proof is postponed to Appendix~\ref{proof_lemma_novikov}, says that an extra condition will guarantee that $\mu_{Y^{(n)}}$ and $\mu_W$ are equivalent.
	\begin{proposition} \label{lemma_novikov}
		If $f$ satisfies (\textbf{L}) and there exists an $\varepsilon>0 $ such that
		\begin{equation} \label{eq_cond_message}
			\E\left[e^{\varepsilon\|\xi \|^2_T } \right]<\infty,
		\end{equation}
		then $\mu_{Y^{(n)}} \sim \mu_W$.
	\end{proposition}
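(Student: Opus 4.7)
The plan is to apply Girsanov's theorem to construct a probability measure $\tilde{\p}^{(n)}$ equivalent to $\p$ under which $Y^{(n)}$ is a standard Brownian motion, and then transport this equivalence to the path space. Since $\mu_{Y^{(n)}}\ll\mu_W$ has already been established, the substance of the proof lies in the reverse direction, which will follow once we show that
\[
Z_T^{(n)} := \exp\left(-\int_0^T f^{(n-1)}(s)\, dW(s) - \tfrac{1}{2}\int_0^T \left(f^{(n-1)}(s)\right)^2 ds\right)
\]
is a true martingale with $\E[Z_T^{(n)}]=1$. For then $d\tilde{\p}^{(n)} := Z_T^{(n)}\,d\p$ defines a measure equivalent to $\p$, and Girsanov's theorem identifies the $\tilde{\p}^{(n)}$-law of $Y^{(n)}$ with $\mu_W$, so that $\mu_W \sim \mu_{Y^{(n)}}$ follows by pushing the equivalence of $\p$ and $\tilde{\p}^{(n)}$ forward through $Y^{(n)}$.

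To this end, I would first establish by induction on $n$ that there exists $\delta_n>0$ with $\E\!\left[\exp(\delta_n\|Y^{(n)}\|_T^2)\right]<\infty$. The base case $n=0$ is trivial since $Y^{(0)}\equiv 0$. For the inductive step, combining the Picard recursion (\ref{eq_picard}) (with $g\equiv 1$) with condition (\textbf{G}) yields
\[
\|Y^{(n)}\|_T^2 \leq 2LT^2\left(1+\|\xi\|_T^2+\|Y^{(n-1)}\|_T^2\right) + 2\|W\|_T^2,
\]
and a three-fold H\"older inequality then reduces the estimate to bounds that are finite by virtue of the hypothesis (\ref{eq_cond_message}), the inductive hypothesis on $Y^{(n-1)}$, and Fernique's theorem for $\|W\|_T$, provided $\delta_n$ is chosen small enough.

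With these moment bounds in hand, I would verify that $Z^{(n)}$ is a true martingale via a stepwise application of Novikov's criterion. Choosing a partition $0=t_0<t_1<\cdots<t_m=T$ with mesh $\delta<\min(\varepsilon,\delta_{n-1})/L$, condition (\textbf{G}) together with the Cauchy--Schwarz inequality gives
\[
\E\!\left[\exp\!\left(\tfrac{1}{2}\int_{t_{i-1}}^{t_i}\left(f^{(n-1)}(s)\right)^2 ds\right)\right] \leq e^{L\delta/2}\,\E\!\left[e^{L\delta\|\xi\|_T^2}\right]^{1/2}\E\!\left[e^{L\delta\|Y^{(n-1)}\|_T^2}\right]^{1/2} < \infty
\]
on every subinterval, which suffices for $Z^{(n)}$ to be a genuine $\p$-martingale on $[0,T]$ by the classical stepwise version of Novikov's criterion. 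The principal obstacle is precisely this last step: a direct application of Novikov over all of $[0,T]$ may fail because $\varepsilon$ is only assumed to be some (possibly small) positive constant, and the stepwise refinement, powered by the inductive exponential moment bound, is what bypasses this restriction.
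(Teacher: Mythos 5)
Your proposal is correct and follows essentially the same route as the paper: reduce to showing the exponential local martingale has unit expectation, then verify this via a stepwise Novikov criterion with mesh small enough relative to the exponential-moment exponent. Your inductive bound $\E[\exp(\delta_n\|Y^{(n)}\|_T^2)]<\infty$ is just the closed-form recursion the paper derives for $\|Y^{(n)}\|_T^2$ in terms of $\|W\|_T^2$ and $\|\xi\|_T^2$, and invoking Fernique for $\|W\|_T$ instead of Doob's submartingale inequality is an inessential variant.
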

	
	We next show the absolute continuity of $\mu_{\xi,Y^{(n)}}$ with respect to $\mu_\xi \times \mu_W$ and derive the formula for $\dfrac{d\mu_{\xi,Y^{(n)}}}{d\left(\mu_\xi\times\mu_W \right)} \left(\xi,Y^{(n)}\right)$, for which we will extend the results in~\cite[Lemmas 7.6 and 7.7]{liptser} below. Consider the process $Z=\{Z(t); 0 \leq t \leq T\}$ defined by
	\begin{equation}
		\label{eq_girs_y}
		Z(t) = \int_0^t h(s, \xi, W)ds + W(t),  \ 0\leq t\leq T,
	\end{equation}	
	where $h: [0, T] \times C[0, T] \times C[0, T] \to \mathbb{R}$ is a {\em non-anticipative} functional, and $\xi$ and $W$ are as in (\ref{eq_sde}). Then, the extension (see Appendix~\ref{proof_pp_girs2} for the proof) says
	\begin{proposition} \label{pp_girs2}
		Suppose that
		\begin{equation} \label{eq_cond_girs2}
			\E\left[\int_0^T h^2(t,\xi,W)dt \right]<\infty, \mbox{ and for any } x, w \in C[0, T], \int_0^T |h(t,x,w)|dt < \infty.
		\end{equation}	
		Then, $\mu_{\xi,Z} \ll \mu_\xi \times \mu_Z$ and moreover,
		{\small \begin{equation}
				\hspace{-1cm} \frac{d\mu_{\xi,Z}}{d(\mu_\xi\times \mu_W)}(\xi,Z) = \exp\left(\int_0^T \E\left[h(t,\xi,W) \big| \filtration{Z,\xi}{t}  \right] dZ(t) - \frac{1}{2}\int_0^T \E^2\left[h(t,\xi,W) \big| \filtration{Z,\xi}{t}  \right] dt \right).
		\end{equation}}
	\end{proposition}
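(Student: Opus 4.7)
The plan is to reduce to the classical single-process case (Lemmas 7.6 and 7.7 of Liptser-Shiryaev, as cited) by disintegrating with respect to $\mu_\xi$ and exploiting the independence of $\xi$ and $W$ under $\p$. In essence, I will freeze $\xi = x$, apply the known result pointwise in $x$, and then re-assemble.

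First, for each fixed $x \in C[0,T]$, define
\[
Z^x(t) := W(t) + \int_0^t h(s, x, W)\, ds,
\]
a process driven only by $W$. By Fubini and $\xi \perp W$, the joint integrability $\E[\int_0^T h^2(t, \xi, W)\, dt] < \infty$ gives $\E[\int_0^T h^2(t, x, W)\, dt] < \infty$ for $\mu_\xi$-a.e. $x$, while the pointwise integrability $\int_0^T |h(t, x, w)|\, dt < \infty$ is given for all $(x,w)$. The hypotheses of the classical Lemma 7.7 of Liptser-Shiryaev thus hold in the $x$-frozen setting, yielding $\mu_{Z^x} \ll \mu_W$ with
\[
\frac{d\mu_{Z^x}}{d\mu_W}(Z^x) = \exp\!\left(\int_0^T \E\!\left[h(t, x, W) \,\big|\, \filtration{Z^x}{t}\right] dZ^x(t) - \frac{1}{2}\int_0^T \E^2\!\left[h(t, x, W) \,\big|\, \filtration{Z^x}{t}\right] dt\right).
\]

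Next, I would unfreeze $x$ and identify the conditional expectations. Using a regular conditional distribution argument together with $\xi \perp W$, there is a jointly measurable functional $\psi(t, x, z)$ depending on $z$ only through $\{z(s); 0 \leq s \leq t\}$ (by the non-anticipative nature of $h$) such that $\psi(t, x, Z^x) = \E[h(t, x, W)\, | \,\filtration{Z^x}{t}]$ for $\mu_\xi$-a.e. $x$. I would then verify that $\psi(t, \xi, Z) = \E[h(t, \xi, W)\, | \,\filtration{Z,\xi}{t}]$ $\p$-a.s. by checking the defining integral identity $\E[\phi(\xi, Z)\, h(t,\xi,W)] = \E[\phi(\xi, Z)\, \psi(t,\xi,Z)]$ for bounded Borel test functions $\phi$; this is a direct consequence of independence and the pointwise identity together with Fubini.

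Finally, for any bounded measurable $F : C[0,T] \times C[0,T] \to \real$, independence gives
\[
\E[F(\xi, Z)] = \int \E[F(x, Z^x)]\, \mu_\xi(dx) = \iint F(x, z)\, G(x, z)\, \mu_W(dz)\, \mu_\xi(dx),
\]
where $G(x, z)$ denotes the explicit density above expressed as a functional of $(x, z)$. This establishes $\mu_{\xi, Z} \ll \mu_\xi \times \mu_W$ with Radon-Nikodym derivative $G$, and substituting the identification from the previous step rewrites $G(\xi, Z)$ as the advertised expression.

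The main obstacle is the identification step: ensuring that the optional projection computed in the frozen space $(x, W)$ coincides, after substituting $x = \xi$, with the projection onto $\filtration{Z,\xi}{t}$ in the joint space. Handling this requires a careful use of the regular conditional distribution of $W$ given $\xi$ and a measurable-selection-type argument to obtain a jointly measurable version of the conditional expectation; the non-anticipative hypothesis on $h$ is what makes the $t$-indexed family of conditional expectations adapted to the right filtration. The remaining steps are routine given the classical Liptser-Shiryaev lemma.
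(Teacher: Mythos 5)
Your proposal is correct and follows essentially the same route as the paper's proof in Appendix~\ref{proof_pp_girs2}: freeze $\xi=x$, apply the one-process Liptser--Shiryaev result to $Z^x$, identify the $\filtration{Z^x}{t}$-conditional expectation with the $\filtration{\xi,Z}{t}$-conditional expectation via a test-function identity exploiting the independence of $\xi$ and $W$, and reassemble by Fubini. The paper packages the reassembly step slightly differently (it writes $Z(t)=Q(t,\xi,W)$, $Z^x(t)=Q(t,x,W)$ for a common non-anticipative functional $Q$ and invokes \cite[Lemma 4.10]{liptser} to substitute the random argument), but this is the same mechanism you describe with ``measurable-selection-type argument,'' so there is no substantive difference.
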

	\noindent Note that an application of Proposition \autoref{pp_girs2} with $h(t, \xi, W) = f(t, \xi, Y^{(n)})$ immediately yields that $\mu_{\xi,Y^{(n)}}$ is absolutely continuous with respect to $\mu_\xi\times\mu_W$, and moreover,
	\begin{equation}
		\label{eq_derivative_step1}
		\frac{d\mu_{\xi,Y^{(n)}}}{d\left(\mu_\xi\times\mu_W \right)} \left(\xi,Y^{(n)}\right) = \exp\left( \int_0^T \beta^{(n)}(t)dY^{(n)}(t)-\frac{1}{2} \int_0^T \left(\beta^{(n)}(t) \right)^2 dt   \right),
	\end{equation}
	where $\beta^{(n)}(t) :=\E\left[f \left(t,\xi,Y^{(n-1)}\right)  \big| \filtration{\xi,Y^{(n)}}{t} \right]$.

	\subsection{Relative Entropy}
	
	\begin{definition}[Relative entropy]
		For two probability measures $\mu$ and $\nu$ on a same probability space, the {\em relative entropy} of $\mu$ with respect to $\nu$ is defined as
		\begin{equation*}
			\D{\mu}{\nu} = \begin{cases}
				\int \log\frac{d\mu}{d\nu}(x)\mu(dx), &\text{if $\mu\ll\nu$;}\\
				\infty &\text{otherwise,}
			\end{cases}
		\end{equation*}
		where $d\mu/d\nu(x)$ is the Radon-Nikodym derivative of $\mu$ with respect to $\nu$.
	\end{definition}
	
	It is well known that relative entropy $\D{\mu}{\nu}$ is convex in $(\mu,\nu)$. To be exact, for any $\alpha \in[0,1]$ and measures $\mu_i, \nu_i, \ i=1,2$, we have
	$$
	\D{\mu}{\nu} \leq \alpha \D{\mu_1}{\nu_1} + (1-\alpha) \D{\mu_2}{\nu_2},
	$$
	where $\mu = \alpha\mu_1 + (1-\alpha) \mu_2$ and $\nu = \alpha\nu_1 + (1-\alpha) \nu_2$. It turns out, as elaborated in the following lemma (see Appendix~\ref{convexity-lemma} for the proof), the convexity can be generalized to our setting, which is crucial in our proof of Theorem~\ref{thm1}.
	\begin{proposition} \label{lemma_relative_entropy_convex}
		Let $U,V$ and $Z$ be $C[0, T]$-valued random variables defined on the probability space $(\Omega,\mathcal{F},\p)$. Suppose $\mu_U \ll \mu_V$. Then,
		\begin{equation}
			\D{\mu_U}{\mu_V} \leq \int_{C[0, T]} \D{\mu_{U,Z}(\cdot,z) }{\mu_{V,Z}(\cdot,z)} \mu_Z(dz).
		\end{equation}
	\end{proposition}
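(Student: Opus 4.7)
The plan is to derive the inequality from two classical ingredients: the \emph{data processing inequality} for relative entropy (monotonicity under measurable push-forwards) and the \emph{chain rule} for relative entropy. Both $\mu_U$ and $\mu_V$ are the first-coordinate marginals of the joint laws $\mu_{U,Z}$ and $\mu_{V,Z}$, so applying monotonicity under the projection $\pi_1:C[0,T]^2 \to C[0,T]$ gives
$$\D{\mu_U}{\mu_V} \leq \D{\mu_{U,Z}}{\mu_{V,Z}}.$$
This inequality itself follows from Jensen's inequality applied to the convex function $x\mapsto x\log x$ combined with the conditional-expectation representation of the push-forward Radon-Nikodym derivative, so it can be recorded briefly and cited rather than re-derived in detail.

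For the second step, I would exploit the fact that $\mu_{U,Z}$ and $\mu_{V,Z}$ share the \emph{common} $Z$-marginal $\mu_Z$, because $Z$ is a single random variable on a fixed probability space. Since $C[0,T]$ is Polish, the disintegration theorem provides regular conditional distributions $\mu_{U,Z}(\cdot,z)$ and $\mu_{V,Z}(\cdot,z)$ with
$$\mu_{U,Z}(du,dz) = \mu_{U,Z}(du,z)\,\mu_Z(dz), \qquad \mu_{V,Z}(du,dz) = \mu_{V,Z}(du,z)\,\mu_Z(dz).$$
When the right-hand side of the target inequality is finite, $\mu_{U,Z}(\cdot,z) \ll \mu_{V,Z}(\cdot,z)$ for $\mu_Z$-a.e.\ $z$, whence $\mu_{U,Z} \ll \mu_{V,Z}$ with
$$\frac{d\mu_{U,Z}}{d\mu_{V,Z}}(u,z) = \frac{d\mu_{U,Z}(\cdot,z)}{d\mu_{V,Z}(\cdot,z)}(u);$$
integrating $\log$ of this factorization against $\mu_{U,Z}$ and applying Fubini--Tonelli gives the chain rule
$$\D{\mu_{U,Z}}{\mu_{V,Z}} = \int_{C[0,T]} \D{\mu_{U,Z}(\cdot,z)}{\mu_{V,Z}(\cdot,z)}\, \mu_Z(dz).$$
Chaining this with the data processing bound closes the proof. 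If the right-hand side is $+\infty$, the claim is vacuous.

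The main technical obstacle is the measure-theoretic bookkeeping rather than any analytic difficulty: one must select jointly measurable versions of the conditional kernels (standard on Polish targets), justify the factorization of the joint Radon-Nikodym derivative through the conditional ones, and legitimize Fubini--Tonelli by a positive/negative-part decomposition of $\log(d\mu_{U,Z}/d\mu_{V,Z})$, noting that finiteness of the right-hand side controls the positive part. Once these routine verifications are in place, the proof collapses into a clean two-step reduction via data processing and the chain rule.
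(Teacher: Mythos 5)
Your proposal is correct, but it routes through a different pair of facts than the paper does. The paper never explicitly forms $\D{\mu_{U,Z}}{\mu_{V,Z}}$: instead it introduces the densities $f(w,z)=\tfrac{d\mu_{U,Z}}{d(\mu_V\times\mu_Z)}(w,z)$ and $g(w,z)=\tfrac{d\mu_{V,Z}}{d(\mu_V\times\mu_Z)}(w,z)$, proves a measure-theoretic log-sum inequality (Lemma~\ref{lemma_relative_entropy_1}) directly from the non-negativity of relative entropy, applies it pointwise in $w$ to the $z$-integral (using $\int f(w,z)\,\mu_Z(dz)=\tfrac{d\mu_U}{d\mu_V}(w)$ and $\int g(w,z)\,\mu_Z(dz)=1$), and then integrates the resulting pointwise inequality against $\mu_V$, with Fubini collapsing the left-hand side to $\int \D{\mu_{U,Z}(\cdot,z)}{\mu_{V,Z}(\cdot,z)}\,\mu_Z(dz)$. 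You instead factor the claim through the intermediate quantity $\D{\mu_{U,Z}}{\mu_{V,Z}}$, bounding $\D{\mu_U}{\mu_V}$ below it via the data processing (marginalization) inequality, and then identifying $\D{\mu_{U,Z}}{\mu_{V,Z}}$ with the disintegrated integral via the chain rule with common $Z$-marginal. Both arguments are correct and ultimately rest on Jensen for $x\mapsto x\log x$ plus a Tonelli-type interchange, so they are morally the same; the log-sum inequality the paper uses is precisely data processing for the ``collapse-to-a-point'' map. What each buys: your decomposition is more modular and is slightly more robust on the absolute-continuity bookkeeping --- it only needs $\mu_{U,Z}(\cdot,z)\ll\mu_{V,Z}(\cdot,z)$ $\mu_Z$-a.e.\ (otherwise the right side is $+\infty$ and the claim is vacuous), whereas the paper's route implicitly requires $\mu_{U,Z},\mu_{V,Z}\ll\mu_V\times\mu_Z$ and $g>0$ to set up $f$ and $g$ --- while the paper's route is self-contained, proving its one auxiliary lemma from scratch rather than citing the chain rule and data processing inequalities.
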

	
	\section{Proof of \autoref{thm1}} \label{Proof-Main-Result}
	
	First of all, we show that the proof boils down to that for the special case $g \equiv 1$.
	
	Consider the process $\tilde{Y} =\{\tilde{Y}(t); 0\leq t\leq T \}$ satisfying the following SDE:
	\begin{equation} \label{eq_modified_Y}
		d\tilde{Y}(t) = \tilde{f}(t,\xi,\tilde{Y})dt + dW(t), \ 0 \leq t \leq T,
	\end{equation}
	with the initial condition $Y(0) = 0$, where $\tilde{f} := fg^{-1}$. It follows from (\textbf{L}), (\textbf{G}) and (\textbf{R}) that $\tilde{f}$ satisfies (\textbf{L}) and (\textbf{G}), and hence the solution to (\ref{eq_modified_Y}) uniquely exists. Then, by~\cite[Theorem 7.19]{liptser}, we have
	\begin{equation}
		\label{eq_y_tildeY}
		\density{Y}{B}{y} = \density{\tilde{Y}}{W}{y} \quad \text{and} \quad  \frac{d\mu_{\xi,Y}}{d\left(\mu_\xi \times \mu_B  \right)}(x,y) = \frac{d\mu_{\xi,\tilde{Y}}}{d\left(\mu_\xi \times\mu_W  \right)}(x,y) ,
	\end{equation}
	where $B=\{B(t);0\leq t\leq T\}$ is the process defined in (\ref{eq_B}). Note that the Picard's iteration corresponding to (\ref{eq_modified_Y}) recursively computes: for any $n \in \mathbb{N}$,
	\begin{equation} \label{eq_picard3}
		\tilde{Y}^{(n)}(t) =  \int_0^t \tilde{f}\left(s,\xi,\tilde{Y}^{(n-1)}\right)ds + W(t),
	\end{equation}
	starting with $\tilde{Y}^{(0)} \equiv 0$. Therefore, following the proofs of~\cite[Theorems 7.14, 7.18 and 7.19]{liptser}, we have
	\begin{equation} \label{eq_yn_tildeYn}
		\density{Y^{(n)}}{B}{y} = \density{\tilde{Y}^{(n)}}{W}{y} \quad \text{and} \quad  \frac{d\mu_{\xi,Y^{(n)}}}{d\left(\mu_\xi \times\mu_B  \right)}(x,y) = \frac{d\mu_{\xi,\tilde{Y}^{(n)}}}{d\left(\mu_\xi \times\mu_W  \right)}(x,y) .
	\end{equation}
	It then immediately follows from (\ref{eq_y_tildeY}) and (\ref{eq_yn_tildeYn}) that Theorem~\ref{thm1} established for the special case $g \equiv 1$ implies that for the general case. So, throughout the remainder of the proof, we will assume that $g \equiv 1$.
	
	\subsection{Some Useful Relations} \label{useful-relations}
	
	For each $x \in C[0, T]$, consider the process $Y^x=\{Y^x(t); 0\leq t\leq T \}$ satisfying the following SDE:
	\begin{equation}
		\label{eq_gaussian_fixedmessage}
		dY^{x}(t) = f(t,x,Y^x)dt + dW(t) , \ 0\leq t\leq T,
	\end{equation}
	with the initial condition $Y^x(0) = 0$. The corresponding Picard's iteration recursively computes: for any $n \in \mathbb{N}$,
	\begin{equation} \label{eq_picard_fixed}
		Y^{x,(n)}(t) =  \int_0^t f\left(s,x,Y^{x,(n-1)}\right)ds + W(t),
	\end{equation}
	starting with $Y^{(0)}(t) \equiv 0$. As discussed in Section~\ref{ACRM}, it holds that $\mu_{Y^x}  \sim \mu_W$ and furthermore
	\begin{equation} \label{eq_density_Yx,W}
		\density{Y^x}{W}{Y^x} = \exp \left( \int_0^T f(t,x,Y^x)dY^x(t) - \frac{1}{2} \int_0^T f^2(t,x,Y^x) dt \right).
	\end{equation}
	
	We now claim that, for any $n \in \naturalnumber_0$, there exists a non-anticipative functional $Q^{(n)}:[0,T]\times C[0, T] \times C[0, T] \to \mathbb{R}$ such that for any $x \in C[0, T]$,
	\begin{equation}  \label{conditional-on-x}
		\p\left(Y^{(n)}(t) = Q^{(n)}(t,\xi,W); \ 0\leq t\leq T \right) = \p\left( Y^{x,(n)}(t) = Q^{(n)}(t,x,W); \ 0\leq t\leq T\right)=1.
	\end{equation}
	Indeed, for $n=0$, one can simply set $Q^{(n)} \equiv 0$. By way of induction, suppose that (\ref{conditional-on-x}) is satisfied by $Q^{(n)}$ for a particular $n$. Then, we have
	\begin{align}
		Y^{x,(n+1)}(t)&=\int_0^t f\left(s,x,Y^{x,(n)}\right)ds + W(t) \nonumber \\
		&= \int_0^t f\left(s,x,Q^{(n)}(\cdot,x,W)\right)ds + W(t).
	\end{align}
	Setting $Q^{(n+1)}(t,x,y):=\int_0^t f\left(s,x,Q^{(n)}(\cdot,x,y)\right)ds + y(t)$, one readily verifies that for any $t\in[0,T]$,
	$$ Y^{(n+1)}(t) = Q^{(n+1)}(t,\xi,W) \ \text{and}\ Y^{x,(n+1)}(t) = Q^{(n+1)}(t,x,W) \ \p\as$$
	Since the above processes admit continuous sample paths, (\ref{conditional-on-x}) holds true for $Q^{(n+1)}$, which in turn yields the claim.
	
	Note that when $\xi$ is fixed as $x$, (\ref{eq_cond_message}) is trivially satisfied, and therefore, by Proposition~\ref{lemma_novikov}, it holds that $\mu_{Y^{x,(n)}} \sim \mu_W$, and, by Proposition~\ref{pp_girs2},
    \begin{equation} \label{den-1}
		\density{Y^{x,(n)}}{W}{Y^{x, (n)}}  = \exp\left(\int_0^T \Phi^{(n)}\left(t,Y^{x, (n)}\right)dY^{x, (n)}(t) - \frac{1}{2}\int_0^T\left(\Phi^{(n)}\left(t,Y^{x, (n)}\right)\right)^2  dt \right),
	\end{equation}
	where $\Phi^{(n)}: [0,T]\times C[0, T] \to \real$ is a non-anticipative functional such that for any $t \in [0, T]$,
	\begin{equation} \label{eq_pp_conditional}
		\Phi^{(n)}\left(t,Y^{x,(n)}\right) = \E\left[f\left(t,x,Y^{x,(n-1)}\right) \big| \filtration{Y^{x,(n)}}{t} \right] \ \p\as.
	\end{equation}

Finally, mimicking the proof of \cite[Lemma 7.6]{liptser} with the representation as in (\ref{conditional-on-x}), we can show that for all $x \in C[0, T]$ and $n \in \naturalnumber_0$,
	\begin{equation} \label{eq_relate_fixed_notfixed1}
		\frac{d\mu_{\xi,Y^{(n)}}}{d\left( \mu_\xi \times \mu_W  \right)}\left(x,Y^{(n)}\right) = \frac{d\mu_{Y^{x,(n)}}}{d\mu_W}\left(Y^{(n)} \right) \ \p\as,
	\end{equation}
	and
	\begin{equation} \label{eq_relate_fixed_notfixed2}
		\frac{d\mu_{\xi,Y}}{d\left( \mu_\xi \times \mu_W  \right)}(x,Y) = \frac{d\mu_{Y^{x}}}{d\mu_W}(Y) \ \p\as.
	\end{equation}
	And moreover, by~\cite[Lemma 7.7]{liptser}, we have the following relations
	\begin{equation}
		\label{eq_relate_cond_uncond1}
		\frac{d\mu_{Y^{(n)}}}{d\mu_W}\left(Y^{(n)}\right) = \int_{C[0, T]}\frac{d\mu_{\xi,Y^{(n)}}}{d(\mu_\xi\times\mu_W)}\left(x,Y^{(n)}  \right) \mu_\xi(dx) \ \ \p \as,
	\end{equation}
	and
	\begin{equation}
		\label{eq_relate_cond_uncond2}
		\frac{d\mu_{Y}}{d\mu_W}\left(Y\right) = \int_{C[0, T]}\frac{d\mu_{\xi,Y}}{d(\mu_\xi\times\mu_W)}\left(x,Y  \right) \mu_\xi(dx) \ \ \p\as.
	\end{equation}	
	
	\subsection{Proof of 1)}
	
	Applying Lemma \ref{lemma_relative_entropy_convex} with $U = Y^{(n)}$, $V = Y$, $Z = \xi$ and $\tilde{V} = W$, we arrive at
	\begin{align*}
		\D{\mu_{Y^{(n)}}}{\mu_Y} &\leq \int_{C[0, T]} \D{\mu_{Y^{(n)}, \xi }(\cdot,x)}{\mu_{Y,\xi} (\cdot,x)} \mu_\xi(dx) \\
		&= \int_{C[0, T]} \left(\E\left[\log \frac{d\mu_{\xi,Y^{(n)}}}{d\left(\mu_\xi \times \mu_W \right)}\left(x,Y^{(n)}\right)  \right] - \E\left[\log\frac{d\mu_{\xi,Y}}{d\left(\mu_\xi \times \mu_W \right)}\left(x,Y\right)  \right] \right)   \mu_\xi(dx) \\
		&= \int_{C[0, T]} \left(\E\left[\log \density{Y^{x,(n)}}{W}{Y^{(n)}}  \right] - \E\left[\log\density{Y^x}{W}{Y^{(n)}}  \right] \right) \mu_\xi(dx),
	\end{align*}
	where for the last equality we have used the relations (\ref{eq_relate_fixed_notfixed1}) and (\ref{eq_relate_fixed_notfixed2}).
	
	As discussed in Sections~\ref{ACRM} and~\ref{useful-relations}, $ \mu_{Y^{x,(n)}} \sim \mu_W$ for any fixed $x \in C[0, T]$ and  $\mu_{Y^{(n)}} \ll \mu_W$ for every $n \in \naturalnumber_0$. It then immediately follows that $\mu_{Y^{(n)}} \ll \mu_{Y^{x, (n)}}$, which, together with~\cite[Lemma 4.10]{liptser}, implies that
	   $$
	   \density{Y^{x,(n)}}{W}{Y^{(n)}}  = \exp\left(\int_0^T \Phi^{(n)}\left(t,Y^{(n)}\right)dY^{(n)}(t) - \frac{1}{2}\int_0^T\left( \Phi^{(n)}\left(t,Y^{(n)}\right)\right)^2  dt \right).
		$$
In a parallel fashion, we deduce that $\mu_{Y^{(n)}} \ll \mu_{Y^x}$, and moreover,
	$$
	\density{Y^x}{W}{Y^{(n)}} = \exp\left(\int_0^T f\left(t,x,Y^{(n)}\right)dY^{(n)}(t) - \frac{1}{2} \int_0^T f^2\left(t,x,Y^{(n)}\right)dt \right).
    $$	

	With $\Phi^{(n)}$ as defined in (\ref{eq_pp_conditional}), we now show that, for each $t\in[0,T]$,
	\begin{equation} \label{conditional-expectation-with-further-conditioning}
		\Phi^{(n)}\left(t,Y^{(n)}\right) = \E\left[f\left(t,x,Y^{(n-1)}\right) \big| \filtration{Y^{(n)} }{t} \right] \ \p\as.
	\end{equation}
	To see this, recall that from Section~\ref{useful-relations} that for each $n \in \naturalnumber_0$, there exists a non-anticipative functional $Q^{(n)}$ satisfying (\ref{conditional-on-x}). Then, for any bounded $\filtration{Y^{(n)}}{t}$-measurable random variable $\lambda = \lambda\left(Y^{(n)}(t)\right)$, we have
	\begin{align*}
	\E\left[ \Phi^{(n)}\left(t,Y^{(n)}\right)\lambda\left(Y^{(n)}(t)\right)\right] &= \int_{C[0, T]}\left( \int_{C[0, T]} \Phi^{(n)}\left(t,Q^{(n)}(\cdot,x,w)\right) \lambda\left(Q^{(n)}(\cdot,x,w)\right) \mu_W(dw)\right)\mu_\xi(dx)\\
	&= \int_{C[0, T]} \int_{C[0, T]} f\left(t,x,Q^{(n)}(\cdot,x,w) \right) \lambda\left(Q^{(n)}(\cdot,x,w)\right)\mu_W(dw)\times\mu_\xi(dx) \\
	&=\E\left[f\left(t,x,Y^{(n)}\right)\lambda\left(Y^{(n)}(t)\right) \right],
	\end{align*}
	where the second equality follows from (\ref{eq_pp_conditional}). The claim (\ref{conditional-expectation-with-further-conditioning}) then immediately follows.
	
	Then, using (\ref{conditional-expectation-with-further-conditioning}) and rewriting $f\left(t, x, Y^{(n)}\right)$ as $f_x^{(n)}(t)$, we have
	{\small \begin{align}
			\D{\mu_{Y^{(n)}}}{\mu_Y} & \leq \int_{C[0, T]} \E\left[\int_0^T \E\left[f_x^{(n-1)}(t) - f^{(n)}_x(t) \big|\filtration{Y^{(n)}}{t}  \right]dY^{(n)}(t) \right. \nonumber\\
			&\hspace{1.5cm} - \left. \frac{1}{2} \int_0^T\left( \E^2\left[f_x^{(n-1)}(t) \big| \filtration{Y^{(n)} }{t} \right] -  \left(f_x^{(n)}(t)\right)^2\right) dt\right] \mu_\xi(dx) \nonumber\\
			&= \int_{C[0, T]} \left( \E\left[\int_0^T \beta_x^{(n)}(t)dW(t)\right] + \frac{1}{2}\E\left[  \int_0^T \left(\beta_x^{(n)}(t)\right)^2dt\right]\right) \mu_\xi(dx), \label{last-inequality}
	\end{align}}
	where $\beta_x^{(n)}(t) = \E\left[f_x^{(n-1)}(t) - f^{(n)}_x(t) \big|\filtration{Y^{(n)}}{t}  \right]$. Now, by Jensen's inequality and the It\^o isometry, we infer that, for the first term in (\ref{last-inequality}),
	\begin{align}
		\E\left[\int_0^T \beta_x^{(n)}(t)dW(t)\right] &\leq \E^{\frac{1}{2}} \left[\int_0^T \left(\beta_x^{(n)}(t)\right)^2 dt \right] \nonumber\\
		&\leq \sqrt{\int_0^T \E\left[\left( f_x^{(n-1)}(t) - f^{(n)}_x(t)   \right)^2\right] dt} \nonumber\\
		&\leq  \sqrt{K\int_0^T\E\left[\left\|Y^{(n-1)} - Y^{(n)} \right\|_t^2 \right]dt }\nonumber\\
		&\leq (c_1KT)^{\frac{1}{2}} \sqrt{\frac{(c_2T)^{n-1}}{(n-1)!}}, \label{1-term}
	\end{align}
	where $c_1$ and $c_2$ are the constants as in \autoref{thm_existence_uniqueness}. Similarly, for the second term in (\ref{last-inequality}), we have
	\begin{equation} \label{2-term}
		\E\left[  \int_0^T \left(\beta_x^{(n)}(t)\right)^2dt\right] \leq c_1KT \frac{(c_2T)^{n-1}}{(n-1)!}.
	\end{equation}
	With the two bounds in (\ref{1-term}) and (\ref{2-term}), we eventually reach
	\begin{equation} \label{eq_relative_entropy_estimate}
		\D{\mu_{Y^{(n)}}}{\mu_Y} \leq (c_1KT)^{\frac{1}{2}} \sqrt{\frac{(c_2T)^{n-1}}{(n-1)!}} +  c_1KT \frac{(c_2T)^{n-1}}{2(n-1)!},
	\end{equation}
	which in turn implies 1) of Theorem~\ref{thm1}.
	
	\subsection{Proof of 2)}
	
	We will need the following lemma, whose proof is postponed to Appendix~\ref{proof_pp_ui}.
	\begin{lemma} \label{pp_ui}
		Assume (\textbf{L}), (\textbf{G}) and the following condition:
		
		(\text{\textbf{B}}) (Uniform Boundedness) There exists $M > 0$ such that for all $\varphi, \psi \in C[0, T]$,
		\begin{equation} \label{eq_cond_step1}
			\int_0^Tf^2(t,\phi,\varphi)dt \leq M.
		\end{equation}
		Then, for any $n\in\naturalnumber_0$ and $p$ with $|p|\geq 1$ ,
		\begin{equation} \label{bound-1}
			\E\left[ \left(\frac{d\mu_{Y^{(n)}}}{d\mu_W}\left(Y^{(n)}\right) \right)^p\right] \leq e^{\frac{p(p+1)M}{2}}.
		\end{equation}
		Moreover, for any $x \in C[0, T]$, we have
		\begin{equation} \label{bound-2}
			\E\left[ \left(\frac{d\mu_{Y^{x,(n)}}}{d\mu_W}\left( Y^{x,(n)} \right)\right)^p\right] \leq e^{\frac{p(p+1)M}{2}}.
		\end{equation}
	\end{lemma}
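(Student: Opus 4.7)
The plan is to first establish (\ref{bound-2}) by making the density along $Y^{x,(n)}$ completely explicit, and then derive (\ref{bound-1}) from (\ref{bound-2}) via Jensen's inequality together with (\ref{eq_relate_cond_uncond1}) and (\ref{eq_relate_fixed_notfixed1}). The crucial preliminary is to show that for fixed $x \in C[0,T]$ the conditional expectation in (\ref{eq_pp_conditional}) collapses, i.e., $\Phi^{(n)}(t, Y^{x,(n)}) = f(t, x, Y^{x,(n-1)})$ $\p\as$ I would prove this by induction on $n$, using that the Picard map $W \mapsto Y^{x,(n)}$ is pathwise invertible under (\textbf{L}): once $W \mapsto Y^{x,(n-1)}$ has been inverted, $W$ is recovered from $Y^{x,(n)}$ as the unique solution of $W = Y^{x,(n)} - \int_0^\cdot f(s, x, Y^{x,(n-1)}(W))\, ds$ by contraction. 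Consequently, $\filtration{Y^{x,(n)}}{t} = \filtration{W}{t}$ up to null sets, so $f(t, x, Y^{x,(n-1)})$ is $\filtration{Y^{x,(n)}}{t}$-measurable and hence equals its own conditional expectation.

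Substituting this identification into (\ref{den-1}) and using $dY^{x,(n)} = f(t, x, Y^{x,(n-1)})\, dt + dW(t)$, a short calculation yields, with the shorthand $f^{x,(n-1)}(t) := f(t, x, Y^{x,(n-1)})$,
\begin{equation*}
\left(\density{Y^{x,(n)}}{W}{Y^{x,(n)}}\right)^p = \mathcal{E}\!\left(p\, f^{x,(n-1)}\cdot W\right)_T \cdot \exp\!\left(\frac{p(p+1)}{2}\int_0^T (f^{x,(n-1)}(t))^2\, dt\right),
\end{equation*}
where $\mathcal{E}(\cdot)$ denotes the Dol\'eans--Dade exponential. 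Condition (\textbf{B}) gives the pathwise bound $\int_0^T (f^{x,(n-1)})^2\, dt \leq M$, which simultaneously verifies Novikov's criterion (so the Dol\'eans--Dade factor has expectation one) and, combined with $p(p+1) \geq 0$ for $|p| \geq 1$, bounds the exponential factor by $e^{p(p+1)M/2}$ almost surely. Taking $\E$ yields (\ref{bound-2}).

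For (\ref{bound-1}), combining (\ref{eq_relate_cond_uncond1}) with (\ref{eq_relate_fixed_notfixed1}) gives $\density{Y^{(n)}}{W}{y} = \int_{C[0,T]} \density{Y^{x,(n)}}{W}{y}\, \mu_\xi(dx)$ for $\mu_W$-a.e.\ $y$. Since $|p| \geq 1$ forces $p + 1 \leq 0$ or $p + 1 \geq 2$, the map $z \mapsto z^{p+1}$ is convex on $(0, \infty)$, so Jensen's inequality yields $\left(\density{Y^{(n)}}{W}{y}\right)^{p+1} \leq \int \left(\density{Y^{x,(n)}}{W}{y}\right)^{p+1} \mu_\xi(dx)$. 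Applying the change-of-variables identities $\E[F(Y^{(n)})^p] = \E_{\mu_W}[F^{p+1}]$ (with $F = \density{Y^{(n)}}{W}{\cdot}$) and $\E_{\mu_W}[G^{p+1}] = \E[G(Y^{x,(n)})^p]$ (with $G = \density{Y^{x,(n)}}{W}{\cdot}$)---both valid because (\textbf{B}) together with Novikov's criterion gives $\mu_{Y^{(n)}} \sim \mu_W$ and $\mu_{Y^{x,(n)}} \sim \mu_W$---together with Fubini and (\ref{bound-2}) yields (\ref{bound-1}).

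The hard part is the filtration-coincidence $\filtration{W}{t} = \filtration{Y^{x,(n)}}{t}$; everything else is routine Dol\'eans--Dade/Novikov manipulation and convexity. Without this simplification, only the expectation-level bound $\E\big[\int_0^T \Phi^{(n)}(t, Y^{x,(n)})^2\, dt\big] \leq M$ is available, which is insufficient for the pathwise Novikov step above.
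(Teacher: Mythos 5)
Your proof is correct but takes a genuinely different route from the paper. The paper proves (\ref{bound-1}) directly by invoking \cite[Theorem 7.1]{liptser}, which writes $\bigl(\frac{d\mu_{Y^{(n+1)}}}{d\mu_W}(Y^{(n+1)})\bigr)^{-1}$ as the conditional expectation $\E\bigl[\exp\bigl(-\int_0^T f^{(n)}\, dY^{(n+1)} + \frac{1}{2}\int_0^T (f^{(n)})^2\, dt\bigr) \big| \filtration{Y^{(n+1)}}{T}\bigr]$; conditional Jensen for the convex map $z \mapsto z^{-p}$ then strips off the conditioning, and after substituting $dY^{(n+1)} = f^{(n)}\,dt + dW$ and completing the square, (\textbf{B}) gives the pathwise bound $\exp\bigl(\frac{p(p+1)}{2}\int_0^T (f^{(n)})^2\, dt\bigr) \leq e^{p(p+1)M/2}$ while Novikov makes $\mathcal{E}(p\,f^{(n)}\cdot W)$ a true martingale; (\ref{bound-2}) follows by the same argument with $\xi$ frozen at $x$. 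You instead prove (\ref{bound-2}) first, by establishing the filtration coincidence $\filtration{Y^{x,(n)}}{t} = \filtration{W}{t}$ so that $\Phi^{(n)}$ collapses to the explicit drift and the density becomes a Dol\'eans--Dade exponential, then lift to (\ref{bound-1}) via the mixture identity $\density{Y^{(n)}}{W}{\cdot} = \int \density{Y^{x,(n)}}{W}{\cdot}\,\mu_\xi(dx)$ and Jensen in $x$ — a neat reduction the paper does not use. The tradeoff is that your route hinges on the pathwise-invertibility side lemma, which is true under (\textbf{L}) but needs more care than your sketch provides (adaptedness and joint measurability of the inverse map, the degenerate base case $n=0$ where $\filtration{Y^{x,(0)}}{t}$ is trivial, and pasting the local contraction across $[0,T]$). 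The paper's conditional-Jensen step sidesteps this entirely: it never needs to identify $\Phi^{(n)}$ or $\alpha^{(n)}$, because Jensen replaces the conditional expectation by the unconditional $f^{(n)}$, which is already pathwise bounded by (\textbf{B}); so the filtration-coincidence argument, while essential to your route, is not essential to the result.
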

	
	Now, we are ready for the proof of 2), which consists of the following two steps:
	
	\paragraph{\bf Step 1.} In this step, we establish 2) with an extra condition (\textbf{B}).
	
	First of all, we note that for each $n \in\naturalnumber_0$, it can be readily verified that
	\begin{equation} \label{eq_mi_step1_split}
		\hspace{-5mm} \D{\mu_{Y^{(n)}}}{\mu_W}-\D{\mu_Y}{\mu_W} = \D{\mu_{Y^{(n)}}}{\mu_Y} + \E\left[\log\density{Y}{W}{Y^{(n)}} \right] - \E\left[\log\density{Y}{W}{Y} \right].
	\end{equation}
	Since it has been established in 1) that $\lim_{n \to \infty} \D{\mu_{Y^{(n)}}}{\mu_Y}=0$, to prove (2), it suffices to show that
	\begin{equation}
		\lim_{n\to\infty}\E\left[\log\density{Y}{W}{Y^{(n)}} \right] = \E\left[\log\density{Y}{W}{Y} \right].
	\end{equation}
	Now, applying Jensen's inequality and Fubini's theorem, we have, for any $p \geq 1$,
	\begin{equation}
		\label{eq_step1_split}
		\E\left[\left|\density{Y}{W}{Y^{(n)}} - \density{Y}{W}{Y} \right|^p \right] \leq \int_{C[0, T]} \E\left[\left| \density{Y^x}{W}{Y^{(n)}} - \density{Y^x}{W}{Y}\right|^p \right] \mu_\xi(dx).
	\end{equation}
	Notice that as discussed before, $\mu_{Y^{(n)}}\ll\mu_W$ for all $n\in\naturalnumber_0$, $\mu_Y\sim\mu_W$ and $\mu_{Y^x}\sim\mu_W$, we have $\mu_{Y^{(n)}} \ll \mu_{Y^x}\sim\mu_Y$, which, together with ~\cite[Lemma 4.10]{liptser}, implies that
	\begin{align}
		\label{above-intertext} \frac{d\mu_{Y^x}}{d\mu_W}\left(Y^{(n)}  \right)&= \exp\left( \int_0^T f\left(t,x,Y^{(n)}\right)dY^{(n)}(t)-\frac{1}{2} \int_0^T f^2\left(t,x,Y^{(n)}\right) dt \right), \ \ \p\as,
		\intertext{and moreover,}
		\label{below-intertext} \frac{d\mu_{Y^x}}{d\mu_W}\left(Y  \right)  &= \exp\left( \int_0^T f\left(t,x,Y\right) dY(t)-\frac{1}{2} \int_0^T f^2\left(t,x,Y\right) dt \right), \ \ \p\as.
	\end{align}
	It then follows from (\ref{above-intertext}), (\ref{below-intertext}), the condition (\textbf{B}) and the Cauchy-Schwarz inequality that
	{\small \begin{align} \label{eq_step2_split1_1}
			&\E\left[\left|  \log\left(\frac{d\mu_{Y^x}}{d\mu_W}\left(Y^{(n)}  \right)\right) - \log\left(\frac{d\mu_{Y^x}}{d\mu_W}\left(Y  \right) \right)  \right|^2    \right] \nonumber\\
			&\leq 2 \E\left[ \left|\int_0^T\left( f^{(n)}_x(t)f^{(n-1)}_x(t) -\frac{1}{2}\left(f^{(n)}_x(t)\right)^2 -\frac{1}{2}f^2_x(t)\right) dt\right|^2 \right] \nonumber\\
			& \hspace{2cm} +  2 \E\left[\left| \int_0^T \left( f_x^{(n)}(t)-f_x(t)  \right)dW(t)  \right|^2   \right] \nonumber \\
			& \leq 18\E\left[\left| \int_0^T f_x^{(n)}(t)\left( f_x^{(n-1)}(t)-f_x(t)  \right) dt\right|^2\right]   + 9\E\left[\left| \int_0^T f_x^{(n)}(t)\left( f_x(t)-f_x^{(n)}(t)  \right)dt\right|^2\right]\nonumber\\
			& \hspace{2cm}+9\E\left[ \int_0^T\left|f_x(t)\left(f^{(n)}_x(t)-f_x(t)  \right)dt\right|^2\right]  + 2\E\left[ \int_0^T \left( f_x^{(n)}(t)-f_x(t) \right)^2dt \right] \nonumber\\
			&\leq 18M\left( \E\left[ \int_0^T \left( f_x^{(n-1)}(t)-f_x(t)  \right)^2 dt \right] + \E\left[ \int_0^T \left( f_x(t)-f_x^{(n)}(t)  \right)^2dt \right]  \right)  \nonumber \\
			& \hspace{2cm} + 2\E\left[ \int_0^T \left( f_x^{(n)}(t)-f_x(t) \right)^2dt \right] \nonumber \\
			&\lesssim_{M,K,L} \frac{(c_2T)^{n-1}}{(n-1)!} + \frac{(c_2T)^{n}}{n!},
	\end{align}}
	where $c_2 = 2KT$, the constants $M, K, L$ in the last inequality does not depend on $x$ (cf. \autoref{thm_existence_uniqueness} and Remark \ref{remark_appendix}), and we have rewritten $f\left(t, x,Y^{(n)}\right), f(t,x,Y)$ as $f_x^{(n)}(t), f_x(t)$, respectively, for notational simplicity.
	
	Note that it can be verified that for any $p \geq 1$ and any $x, y \in \real$,
	\begin{equation}
		\label{eq_exp_inequality}
		|e^x-e^y|^p \leq |e^x-e^y|^{p-1} (e^x\vee e^y) |x-y|.
	\end{equation}
	Using this, the Cauchy-Schwarz inequality and Proposition \autoref{pp_ui}, we have the following estimate
	\begin{align}
		\label{eq_proof_lemma2_prob}
		\E\left[\left|  \frac{d\mu_{Y^{x}}}{d\mu_W}\left(Y^{(n)}  \right)-  \frac{d\mu_{Y^x}}{d\mu_W}\left(Y  \right)\right|^p \right] &
		\lesssim_{M} \sqrt{\E\left[\left|  \log\left(\frac{d\mu_{Y^x}}{d\mu_W}\left(Y^{(n)}  \right)\right) - \log\left(\frac{d\mu_{Y^x}}{d\mu_W}\left(Y  \right) \right)  \right|^2    \right]}\nonumber \\
		&\lesssim_{M,K,L,p} \sqrt{ \frac{(c_2T)^{n-1}}{(n-1)!} + \frac{(c_2T)^{n}}{n!} }.
	\end{align}
	Since the bound in (\ref{eq_proof_lemma2_prob}) does not depend on $x$, we infer from (\ref{eq_step1_split}) that
	\begin{equation}
		\label{eq_step1_split1}
		\E\left[\left|\density{Y}{W}{Y^{(n)}} - \density{Y}{W}{Y} \right|^p \right] \lesssim_{M,K,L,p} \sqrt{ \frac{(c_2T)^{n-1}}{(n-1)!} + \frac{(c_2T)^{n}}{n!}}.
	\end{equation}
	Now, using the well-known inequality $|\log x|\leq |x-1|+\left|1/x-1 \right|$, Proposition \ref{pp_ui}, H\"older's inequality and (\ref{eq_step1_split1}), we arrive at
	\begin{align} \label{eq_step1_semifinal}
		&\E\left[\left|\log\left(\frac{d\mu_Y}{d\mu_W}\left(Y^{(n)}\right)\right)-\log\left(\frac{d\mu_Y}{d\mu_W}\left(Y\right) \right)\right| \right] \nonumber \\
		&\leq\E\left[\left( \left(\frac{d\mu_Y}{d\mu_W}\left(Y\right) \right)^{-1} + \left(\frac{d\mu_Y}{d\mu_W}\left(Y^{(n)}\right) \right)^{-1}  \right)\left| \frac{d\mu_Y}{d\mu_W}\left(Y^{(n)}\right) -\frac{d\mu_Y}{d\mu_W}(Y) \right| \right] \nonumber \\
		&\lesssim_{M,p} \E^{\frac{1}{p}}\left[\left| \frac{d\mu_{Y}}{d\mu_W}\left(Y^{(n)}\right) -\frac{d\mu_Y}{d\mu_W}(Y) \right|^p  \right] \nonumber \\
		&\lesssim_{M,K,L,p}  \left(\frac{(c_2T)^n}{n!} + \frac{(c_2T)^{n-1}}{(n-1)!}\right)^{\frac{1}{2p}}.
	\end{align}		
	Therefore, by  (\ref{eq_mi_step1_split}), (\ref{eq_relative_entropy_estimate}) and (\ref{eq_step1_semifinal}), we reach
	{\small \begin{equation} \label{eq_step1_final}
			\hspace{-1cm} \left|\D{\mu_{Y^{(n)}}}{W} - \D{\mu_Y}{\mu_W}   \right| \lesssim_{M,K,L,p}  \left(\frac{(c_2T)^{n-1}}{(n-1)!} + \frac{(c_2T)^{n}}{n!}\right)^{\frac{1}{2p}} +  \sqrt{\frac{(c_2T)^{n-1}}{(n-1)!}} +  \frac{(c_2T)^{n-1}}{(n-1)!},
	\end{equation}}
	which converges to $0$ as $n$ tends to infinity, and thereby completing \textbf{Step 1}.
	
	\begin{remark}
        At the expense of a possible increase of the corresponding constant, the rate of convergence as in (\ref{eq_step1_final}) can be made arbitrarily close to $\mathcal{O}\left(\sqrt{\frac{(c_2T)^{n-1}}{(n-1)!}}\right)$ by choosing $p$ close enough to $1$.
	\end{remark}
	
	\paragraph{\bf Step 2.} In this step, we finish the proof of 2) without the condition (\textbf{B}).
	
	First of all, for each $n, m \in \naturalnumber$, we define the stopping times $\tau^{(n)}_m$ and $\tau_m$ by
	\begin{align}
		\label{eq_stopping}
		\tau_m^{(n)}&:= \begin{cases}
			\inf\{t\leq T: \int_0^t f^2(s,\xi,Y^{(n)})ds \geq m  \}, &\text{for $\int_0^Tf^2\left(t,\xi,Y^{(n)}\right)dt\geq m$;}\\
			T, &\text{otherwise,}
		\end{cases}\\
		\intertext{and}
		\tau_m &:= \begin{cases}
			\inf\{t\leq T: \int_0^t f^2(s,\xi,Y)ds \geq m  \}, &\text{for $\int_0^Tf^2(t,\xi,Y)dt\geq m$;}\\
			T, &\text{otherwise.}
		\end{cases}
	\end{align}
	Furthermore, we define the stopping time $\sigma_m$ by
	\begin{align}
		\sigma_m&:= \inf_{n \geq 0} \left\{ \tau_m^{(n)} \right\}.
	\end{align}
	For each fixed $n \in \naturalnumber_0$, it is easy to see that $\tau_m^{(n)} \uparrow T$ $\p \as$ as $m \to\infty$; and moreover, by the easily verifiable fact that
	\begin{equation}
	\label{eq_int.f_converges}
	\lim_{n\to\infty}\int_0^T f^2\left(t,\xi,Y^{(n)}\right)dt = \int_0^T f^2(t,\xi,Y)dt,
	\end{equation}
	we infer that $\sigma_m \uparrow T$ $\p \as$ as $m \to\infty$. Now, we are ready to define truncated versions of $f$ and $Y$ as below:
	\begin{equation}
		f_{(m)}(t,\phi,\varphi):=f(t,\phi,\varphi)\1_{\{\int_0^t f^2(s,\phi,\varphi)ds \leq m  \}},
	\end{equation}	
	and
	\begin{equation}
		\label{eq_truncate_1}
		Y_{(m)}(t):= \int_0^t f_{(m)}(s,\xi,Y)ds + W(t), \ 0\leq t\leq T.
	\end{equation}
	Since for any $t \in[0,T]$,  $Y_{(m)}(t)\1_{\{t\leq \tau_m\}}=Y(t)\1_{\{t\leq \tau_m\}} \ \p\as$, and both the processes  $\{Y_{(m)}(t)\1_{\{t\leq \tau_m\}}; 0\leq t\leq T  \}$ and $\{Y(t)\1_{\{t\leq \tau_m\}} \}$ admit left-continuous sample paths, they are indistinguishable to each other. Consequently, we have
	\begin{equation}
		\p\left( \int_0^t f_{(m)}(s,\xi,Y)ds = \int_0^t f_{(m)}\left(s,\xi,Y_{(m)} \right)ds;\ 0\leq t\leq T \right)=1,
	\end{equation}
	and moreover,
	\begin{equation}
		Y_{(m)}(t) = \int_0^t f_{(m)}(s,\xi,Y_{(m)})ds+W(t), \ 0\leq t\leq T.
	\end{equation}
	Note that, for any fixed $m \in \naturalnumber$, the corresponding Picard's iteration $\left\{Y^{(n)}_{(m)}\right\}_{n=0}^\infty$ recursively computes: for any $n \in \naturalnumber$,
	\begin{equation} \label{eq_picard_truncate}	
		Y_{(m)}^{(n)}(t)= \int_0^t f_{(m)}\left(s,\xi,Y_{(m)}^{(n-1)}\right)ds + W(t),
	\end{equation}
	starting with $Y_{(m)}^{(0)}(t) \equiv 0$. By way of induction, we can show that, for all $n, m \in \naturalnumber_0$ and $t\in[0,T]$,
	\begin{equation} \label{eq_truncate_nontruncate}
		\hspace{-0.5cm} Y_{(m)}^{(n)}(t)\1_{\{t\leq \sigma_m\}} = Y^{(n)}(t)\1_{\{t\leq \sigma_m\}} \ \p \as \quad\text{and}\quad Y_{(m)}(t)\1_{\{t\leq \sigma_m\}}=Y(t)\1_{\{t\leq \sigma_m\}}\ \p \as,
	\end{equation}
	i.e., the process $\{ Y^{(n)}_{(m)}(t) \1_{\{t\leq \sigma_m\}}; 0\leq t\leq T\}$ is a modification of $ \left\{Y^{(n)}(t) \1_{\{t\leq \sigma_m\}};0\leq t\leq T\right\}$. Again, since both of them admit left-continuous sample paths, they are indistinguishable to each other. And a similar argument can be applied to establish the indistinguishability between the two processes $\left\{ Y_{(m)}(t) \1_{\{t\leq \sigma_m\}}; 0\leq t\leq T \right\}$ and $\left\{ Y(t) \1_{\{t\leq \sigma_m\}}; 0\leq t\leq T \right\}$. Hence, when restricted to the event $\{\sigma_m=T\}$, the processes $Y^{(n)}_{(m)}$ and $Y_{(m)}$ share the same laws as $Y^{(n)}$ and $Y$, respectively.
	
	With the above preparations, we are now ready to establish the desired convergence in 2). Splitting $\E\left[\log\density{Y}{W}{Y} \right]$ with respect to the two non-overlapping events $\{\sigma_m = T\}$ and $\{\sigma_m <T\}$, we have
	\begin{align}
		\label{eq_step2_split_y}
		\E\left[\log\density{Y}{W}{Y} \right] &= \E\left[\log\density{Y}{W}{Y} \left(\1_{\{\sigma_m=T \}} + \1_{\{\sigma_m<T \}}  \right) \right] \nonumber \\
		&=\E\left[\log\density{Y_{(m)}}{W}{Y_{(m)}} \right] + \E\left[\log \frac{d\mu_Y}{d\mu_W}(Y) \1_{\{\sigma_m<T \} } \right] .
	\end{align}
	Similarly, for each $n,m\in\naturalnumber$, by (\ref{eq_truncate_nontruncate}), we have
	{\small \begin{equation} \label{eq_step2_split_yn}
			\hspace{-1cm} \E\left[\log\left(\frac{d\mu_{Y^{(n)}}}{d\mu_W}\left(Y^{(n)}\right)\right) \right] = \E\left[\log\left(\frac{d\mu_{{Y_{(m)}^{(n)}}}}{d\mu_W}\left(Y^{(n)}_{(m)}\right)\right)  \right]  + \E\left[\log\left(\frac{d\mu_{Y^{(n)}}}{d\mu_W}\left(Y^{(n)}\right)\right)\1_{\{ \sigma_m<T \}} \right].
	\end{equation}}
	Note that by \textbf{Step 1}, we have
	\begin{equation} \label{Step1-Result}
		\lim_{n \to \infty} \E\left[\log\left(\frac{d\mu_{{Y_{(m)}^{(n)}}}}{d\mu_W}\left(Y^{(n+1)}_{(m)}\right)\right)  \right]=\E\left[\log\density{Y_{(m)}}{W}{Y_{(m)}} \right].
	\end{equation}
	And, by (\ref{eq_step2_split_y}), (\ref{eq_step2_split_yn}) and (\ref{Step1-Result}), we have
	\begin{align} \label{eq_step2_split}
		& \ \ \ \E\left[\log\left(\frac{d\mu_Y}{d\mu_W}(Y)\right) \right] - \lim_{n\to\infty}	\E\left[\log\left(\frac{d\mu_{Y^{(n)}}}{d\mu_W}\left(Y^{(n)}\right)\right) \right]\nonumber\\& = \E\left[\log\left(\frac{d\mu_Y}{d\mu_W}(Y)\right) \1_{\{\sigma_m<T \} } \right]  - \lim_{n\to\infty} \E\left[\log\left(\frac{d\mu_{Y^{(n)}}}{d\mu_W}\left(Y^{(n)}\right)\right)\1_{\{ \sigma_m<T \}} \right].
	\end{align}	
	Noticing that (\ref{eq_step2_split}) holds true for all $m \geq 0$ and using the fact that $\1_{\{\sigma_m<T \}}\downarrow 0$ as $m\to\infty$, we deduce that
	\begin{equation}
		\lim_{m\to\infty}\E\left[\log\left(\frac{d\mu_Y}{d\mu_W}(Y)\right) \1_{\{\sigma_m<T \} } \right] = 0,
	\end{equation}
	where we have applied the dominated convergence theorem. The proof is then complete if we can show
	\begin{equation}
		\label{eq_step2_split2}
		\lim_{m\to\infty}\lim_{n\to\infty} \E\left[\log\left(\frac{d\mu_{Y^{(n)}}}{d\mu_W}\left(Y^{(n)}\right)\right)\1_{\{ \sigma_m<T \}} \right]=0.
	\end{equation}
	To this end, we note that by (\ref{eq_logderivative_uncondition}),
	\begin{multline}
		\E\left[\log\left(\frac{d\mu_{Y^{(n)}}}{d\mu_W}\left(Y^{(n)}\right)\right)\1_{\{ \sigma_m<T \}}\right] =\\ \E\left[\left(\int_0^T \alpha^{(n)}(t)dW(t)    - \frac{1}{2} \int_0^T \left(\alpha^{(n)}(t)\right)^2dt   +  \int_0^T f^{(n-1)}(t) \alpha^{(n)}(t)  dt\right)\1_{\{ \sigma_m<T \}}\right],
	\end{multline}
	where $\alpha^{(n)}(t) := \E\left[ f\left(t,\xi,Y^{(n-1)}\right) \big| \filtration{Y^{(n)}}{t}\right]$ and $f^{(n)}(t)$ represents $f(t, \xi, Y^{(n)})$. By the It\^o isometry, the Cauchy-Schwarz inequality and Jensen's inequality, we have
	\begin{align}
		\E\left[\left|\int_0^T \alpha^{(n)}(t)dW(t) \1_{\{\sigma_m<T\} }\right| \right] &\leq\sqrt{ \E\left[ \int_0^T \left(\alpha^{(n)}(t)\right)^2dt \right] \p(\sigma_m<T) } \nonumber\\
		&\leq \sqrt{ \E\left[ \int_0^T \left(f^{(n-1)}(t)\right)^2dt \right] \p(\sigma_m<T)}.
    \end{align}
    By (\ref{eq_int.f_converges}), with $f(t)$ represents $f(t, \xi, Y)$, we have
    \begin{equation}
		\lim_{n\to\infty}\E\left[\left|\int_0^T \alpha^{(n)}(t)dW(t) \1_{\{\sigma_m<T\} }\right| \right] \leq \sqrt{ \E\left[ \int_0^T  f^2(t)dt \right] \p(\sigma_m<T) } ,
	\end{equation}
	and therefore,
	\begin{equation}
		\label{eq_proof_lemma4.3_key_1}
		\lim_{m\to\infty}\lim_{n\to\infty}	\E\left[\int_0^T \alpha^{(n)}(t)dW(t) \1_{\{\sigma_m<T\} } \right]=0.
	\end{equation}
	Using a similar argument, we deduce that
	\begin{align}
		\hspace{-1cm} \E\left[\left|\int_0^T f^{(n-1)}(t) \alpha^{(n)}(t)  dt\1_{\{ \sigma_m<T \}}\right| \right] & \leq \E\left[\left(\int_0^T \left(f^{(n-1)}(t)\right)^2dt\right)^{1/2} \left(\int_{0}^{T}\left(\alpha^{(n)}(t) \right)^2dt\right)^{1/2} \1_{\{ \sigma_m<T \}}   \right] \nonumber \\
		&\hspace{-2cm} \leq\sqrt{ \E\left[\int_0^T \left(f^{(n-1)}(t)\right)^2dt \1_{\{ \sigma_m<T \}}   \right]\E\left[ \int_{0}^{T}\left(\alpha^{(n)}(t) \right)^2dt\right]} \nonumber\\
		&\hspace{-2cm} \leq \sqrt{ \E\left[\int_0^T \left(f^{(n-1)}(t)\right)^2dt \1_{\{ \sigma_m<T \}}   \right]\E\left[ \int_{0}^{T}\left(f^{(n-1)}(t) \right)^2dt\right]},
	\end{align}
	which, upon letting $n\to\infty$, yields
	\begin{equation}
		\hspace{-1cm} \lim_{n\to\infty}\E\left[\left|\int_0^T f^{(n-1)}(t) \alpha^{(n)}(t)  dt\1_{\{ \sigma_m<T \}}\right| \right] \leq \sqrt{ \E\left[\int_0^T f^2(t)dt \1_{\{ \sigma_m<T \}} \right] \E\left[ \int_{0}^{T}f^2(t)dt\right]}.
	\end{equation}
	A further application of the dominated convergence theorem then gives
	\begin{equation} \label{eq_proof_lemma4.3_key_2}
		\lim_{m\to\infty}\lim_{n\to\infty}\E\left[\int_0^T f^{(n-1)}(t) \alpha^{(n)}(t)  dt\1_{\{ \sigma_m<T \}} \right]=0.
	\end{equation}
	Next, by Jensen's inequality and (\textbf{L}), we have
	{\small \begin{align} \label{eq_proof_lemma4.3_key_3_1}
			&\E\left[\int_0^T \left(\alpha^{(n)}(t)\right)^2 dt \1_{\{\sigma_m<T \}} \right] \nonumber\\
				&\leq \E\left[\int_0^T \E\left[\left(f^{(n-1)}(t)\right)^2 \Big|\filtration{Y^{(n)}}{t}   \right]dt \1_{\{\sigma_m<T \}}   \right] \nonumber \\
			&\leq \E\left[ \int_0^T  \E\left[ L\left(1+ \sup_{0 \leq s \leq t}|\xi(s) |^2 + \sup_{0 \leq s \leq t}\left|Y^{(n-1)}(s) \right|^2 \right)  \Big| \filtration{Y^{(n)}}{t}  \right]dt   \1_{\{\sigma_m<T \}} \right] \nonumber\\
			&\leq \E\left[ \int_0^T  \E\left[ L\left(1+\sup_{0 \leq s \leq t}|\xi(s) |^2 + 2\sup_{0 \leq s \leq t}\left|Y^{(n)}(s)- Y^{(n-1)}(s) \right|^2 \right)  \Big| \filtration{Y^{(n)}}{t}   \right]dt   \1_{\{\sigma_m<T \}} \right] \nonumber\\
			&\hspace{2cm} + 2\E\left[ \int_0^T \sup_{0 \leq s \leq t}\left|Y^{(n)}(s) \right|^2 dt \1_{\{\sigma_m<T \}} \right]  \\
			&\leq L\left( T\p(\sigma_m<T) + \E\left[\int_0^T \E\left[\|\xi\|_T^2 + 2\left\|Y^{(n)}- Y^{(n-1)} \right\|_T^2 \Big| \filtration{Y^{(n)}}{t} \right]dt \1_{\{\sigma_m<T\}} \right] \right. \nonumber\\
			&\hspace{2cm} + \left. 2T\E\left[\left\| Y^{(n)} \right\|^2_T \1_{\{\sigma_m<T \}} \right] \right).
			\end{align}}
	It is easy to see that
	\begin{equation}
		\label{eq_proof_lemma4.3_key_3_2}
		\lim_{m\to\infty}\lim_{n\to\infty} \E\left[\left\| Y^{(n)} \right\|^2_T \1_{\{\sigma_m<T \}} \right] = \lim_{m\to\infty}\E\left[\left\| Y \right\|^2_T \1_{\{\sigma_m<T \}} \right]=0,
	\end{equation}
	and
	\begin{equation}
		\label{eq_proof_lemma4.3_key_3_3}
		\lim_{n\to\infty} \E\left[\int_0^T \E\left[ \left\|Y^{(n)}- Y^{(n-1)} \right\|_T^2 \bigg| \filtration{Y^{(n)}}{t} \right]dt  \1_{\{\sigma_m<T\}} \right] \leq  T\lim_{n\to\infty} \E\left[ \left\|Y^{(n)}- Y^{(n-1)} \right\|_T^2\right] =0.
	\end{equation}
	Finally, since $\E\left[\|\xi\|_T^2\right] <\infty$, by \cite[Theorem 5.5.1]{PTE}, the collection
	$$
	\left\{\E\left[\|\xi\|_T^2 \big| \filtration{Y^{(n)}}{t} \right]  ; \ n\in\naturalnumber_0, \ 0\leq t\leq T\right\}
	$$
	is uniformly integrable. It then follows that
	\begin{equation}
		\label{eq_proof_lemma4.3_key_3_4}
		\lim_{m\to\infty}\limsup_{n\to\infty} \E\left[\int_0^T \E\left[\|\xi \|_T^2 \Big| \filtration{Y^{(n)}}{t} \right]\1_{\{\sigma_m<T \} }dt   \right]=0.
	\end{equation}
	Combining (\ref{eq_proof_lemma4.3_key_3_1}), (\ref{eq_proof_lemma4.3_key_3_2}), (\ref{eq_proof_lemma4.3_key_3_3}) and (\ref{eq_proof_lemma4.3_key_3_4}), we infer that
	\begin{equation}
		\label{eq_proof_lemma4.3_key_3}
		\lim_{m\to\infty}\lim_{n\to\infty} \E\left[\int_0^T \left(\alpha^{(n)}(t)\right)^2 dt \1_{\{\sigma_m<T \}} \right]=0.
	\end{equation}
	Hence, the desired (\ref{eq_step2_split2}) follows from (\ref{eq_proof_lemma4.3_key_1}), (\ref{eq_proof_lemma4.3_key_2}) and (\ref{eq_proof_lemma4.3_key_3}), which in turn completes the proof of the theorem.
	
	\section{Application in Information Theory} \label{sec_proof}

	In the context of information theory, an SDE taking the form
	\begin{equation} \label{eq_gaussian_channel}
		Y(t) = \int_0^t f(s,\xi,Y)ds + W(t), \ 0\leq t \leq T,
	\end{equation}
	is often used to model a continuous-time Gaussian channel with feedback, where $\xi$ is interpreted as the message to be transmitted, $f$ as the channel input and $Y$ as the channel output. The {\em mutual information} $I_T(\xi;Y)$, a fundamental notion measuring the information transmission rate of the channel (\ref{eq_gaussian_channel}), is defined as
	\begin{equation} \label{convergence-rate}
		I_T(\xi;Y) := \E\left[\log \frac{d\mu_{\xi,Y}}{d\left(\mu_\xi  \times \mu_Y \right) }(\xi,Y) \right]= \D{\mu_{\xi,Y}}{\mu_\xi \times \mu_Y }.
	\end{equation}
	
	Exemplifying the possible applications of Theorem~\ref{thm1}, the following theorem show that $I_T(\xi;Y)$ can be approximated via the means of Picard's iteration.
	\begin{theorem} \label{pp_mi}
		Let $\{Y^{(n)}\}_{n=0}^\infty$ be as in (\ref{eq_picard}) with $g \equiv 1$ and $\xi=\{\xi(t),\mathcal{F}_t;0\leq t\leq T \}$ be such that $\E\left[\|\xi\|_T^2 \right]<\infty$. Suppose $f$ and $g$ satisfy (\textbf{L}) and (\textbf{G}). Then,
		\begin{equation} \label{eq_result_mi}
			\lim_{n\to\infty}I_T\left(\xi;Y^{(n)} \right) = I_T(\xi;Y).
		\end{equation}
		In addition, if $f$ further satisfies (\textbf{B}), then, for any $p \geq 1$, we have
		\begin{equation}
			\label{eq_result_mi2}
			\left|I_T\left(\xi;Y^{(n)}\right) - I_T(\xi;Y) \right| \lesssim_{M,K,L,p}  \left(\frac{(cT)^{n-1}}{(n-1)!} + \frac{(cT)^{n}}{n!}\right)^{\frac{1}{2p}} +  \sqrt{\frac{(c_2T)^{n-1}}{(n-1)!}} +  \frac{(c_2T)^{n-1}}{(n-1)!},
		\end{equation}
		where $c_1,c_2$ are the constants as in \autoref{thm_existence_uniqueness}.
	\end{theorem}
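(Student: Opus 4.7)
The plan is to reduce Theorem~\ref{pp_mi} to Theorem~\ref{thm1} via the decomposition
\[
I_T(\xi; Y^{(n)}) = \D{\mu_{\xi, Y^{(n)}}}{\mu_\xi \times \mu_W} - \D{\mu_{Y^{(n)}}}{\mu_W},
\]
and its counterpart for $Y$; both identities are valid since $\mu_{Y^{(n)}} \ll \mu_W$ and (from (\ref{YsimW}) with $B = W$, the latter being automatic when $g \equiv 1$) $\mu_Y \sim \mu_W$. Because condition (\textbf{R}) is trivial for $g \equiv 1$, Theorem~\ref{thm1} part 2) applies and yields $\D{\mu_{Y^{(n)}}}{\mu_W} \to \D{\mu_Y}{\mu_W}$, so the proof of (\ref{eq_result_mi}) reduces to showing
\[
\D{\mu_{\xi, Y^{(n)}}}{\mu_\xi \times \mu_W} \to \D{\mu_{\xi, Y}}{\mu_\xi \times \mu_W}.
\]

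By the standard chain rule for relative entropy, together with the identifications (\ref{eq_relate_fixed_notfixed1}) and (\ref{eq_relate_fixed_notfixed2}) (which, combined with (\ref{conditional-on-x}) and the independence of $\xi$ and $W$, identify the conditional distribution of $Y^{(n)}$ given $\xi = x$ with $\mu_{Y^{x,(n)}}$), each side becomes
\[
\D{\mu_{\xi, Y^{(n)}}}{\mu_\xi \times \mu_W} = \int_{C[0,T]} \D{\mu_{Y^{x,(n)}}}{\mu_W}\,\mu_\xi(dx), \quad \D{\mu_{\xi, Y}}{\mu_\xi \times \mu_W} = \int_{C[0,T]} \D{\mu_{Y^{x}}}{\mu_W}\,\mu_\xi(dx),
\]
where $\{Y^{x,(n)}\}$ is the Picard iteration (\ref{eq_picard_fixed}) of the frozen-input SDE (\ref{eq_gaussian_fixedmessage}). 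Applying Theorem~\ref{thm1} part 2) to each deterministic $x$ (the moment hypothesis reducing to the trivial $\|x\|_T^2 < \infty$) yields pointwise convergence of the integrand, and passing to the limit under the $\mu_\xi$-integral then delivers (\ref{eq_result_mi}). For the rate (\ref{eq_result_mi2}) under (\textbf{B}), I would mimic Step 1 of the proof of Theorem~\ref{thm1} part 2) at the level of joint densities: starting from (\ref{eq_derivative_step1}), rewriting through (\ref{eq_relate_fixed_notfixed1})--(\ref{eq_relate_fixed_notfixed2}), and invoking the uniform moment bounds of Lemma~\ref{pp_ui} (whose constants depend only on $M$, not on $x$), one derives the analog of (\ref{eq_step1_final}) for $|\D{\mu_{\xi, Y^{(n)}}}{\mu_\xi \times \mu_W} - \D{\mu_{\xi, Y}}{\mu_\xi \times \mu_W}|$ with the same rate; combined with (\ref{eq_step1_final}) itself this delivers (\ref{eq_result_mi2}).

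The main obstacle lies in the exchange of limit and integral in the unconditional case (only (\textbf{L}) and (\textbf{G}) assumed), because the rate for $\D{\mu_{Y^{x,(n)}}}{\mu_W} \to \D{\mu_{Y^x}}{\mu_W}$ afforded by Theorem~\ref{thm1} involves the constant $c_1 = 2TL(T+4)(1+\|x\|_T^2)$, which is only $\mu_\xi$-integrable (via $\E[\|\xi\|_T^2]<\infty$), not uniformly bounded. The cleanest way around this is to mirror Step 2 of the proof of Theorem~\ref{thm1} part 2): for each $m$, employ the truncated functional $f_{(m)}$, which satisfies (\textbf{B}) with constant $m$, deduce convergence of the corresponding truncated mutual information via the preceding paragraph, and then remove the truncation by letting $m \to \infty$ using the stopping time $\sigma_m \uparrow T$ together with bounds analogous to (\ref{eq_proof_lemma4.3_key_1})--(\ref{eq_proof_lemma4.3_key_3}), but now with $\filtration{\xi, Y^{(n)}}{t}$-conditioning in place of $\filtration{Y^{(n)}}{t}$-conditioning.
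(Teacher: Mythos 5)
Your overall decomposition $I_T = \D{\mu_{\xi,\cdot}}{\mu_\xi\times\mu_W} - \D{\mu_{\cdot}}{\mu_W}$ is exactly the paper's starting point, and handling the second piece via Theorem~\ref{thm1} part 2) is also what the paper does. Where you diverge is in the treatment of the joint term $\D{\mu_{\xi,Y^{(n)}}}{\mu_\xi\times\mu_W}$. You route it through the relative-entropy chain rule $\D{\mu_{\xi, Y^{(n)}}}{\mu_\xi \times \mu_W} = \int \D{\mu_{Y^{x,(n)}}}{\mu_W}\,\mu_\xi(dx)$, apply the full Theorem~\ref{thm1} part 2) (Steps 1 and 2, with their appeal to Lemma~\ref{pp_ui}) to each frozen $x$, and then face a genuine limit--integral interchange which you propose to handle by a truncation argument parallel to Step 2. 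The paper instead works directly with the joint Radon--Nikodym derivative (\ref{eq_derivative_step1}) and bounds the $L^1$ difference of the two \emph{log} densities (yielding (\ref{eq_mi_step1_final})). The crucial structural fact that makes the direct route short is that $\beta^{(n)}(t) = \E[f^{(n-1)}(t)\,|\,\filtration{\xi,Y^{(n)}}{t}]$ actually equals $f^{(n-1)}(t)$: since $W$ is a causal functional of $(\xi,Y^{(n)})$ (via the $Q^{(n)}$ representation in (\ref{conditional-on-x})), $f^{(n-1)}(t)$ is $\filtration{\xi,Y^{(n)}}{t}$-measurable. With $\beta^{(n)} = f^{(n-1)}$, the joint log density becomes a plain stochastic-integral plus Lebesgue-integral expression, and the $L^1$ bound follows from (\textbf{L}), Cauchy--Schwarz, and the Picard $L^2$ rates of Theorem~\ref{thm_existence_uniqueness} — \emph{without} invoking (\textbf{B}), Lemma~\ref{pp_ui}, or any truncation for this piece. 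Consequently the rate (\ref{eq_mi_step1_final}) already holds under just (\textbf{L}) and (\textbf{G}), and (\ref{eq_result_mi2}) follows by pairing it with the paper's rate for the unconditional term. Your plan is not wrong, but both pieces of it are heavier than needed: applying Theorem~\ref{thm1} part 2) per $x$ (instead of noting $\D{\mu_{Y^{x,(n)}}}{\mu_W} = \tfrac12\E\int_0^T f^2(t,x,Y^{x,(n-1)})\,dt$ by the same adaptedness argument), and invoking Lemma~\ref{pp_ui} for the joint term where the paper uses no moment bounds on RN derivatives at all. The shortcut to look for is the adaptedness that collapses $\beta^{(n)}$ to $f^{(n-1)}$.
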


	\begin{proof}
		
		First of all, we note that the mutual information $I_T(\xi;Y)$ can be split into
		\begin{align}
			I_T(\xi;Y)  &= \E \left[\log\left( \frac{d\mu_{\xi,Y}}{d(\mu_\xi\times \mu_{Y})}\left(\xi ,Y \right)\right)  \right] \nonumber \\
			&= \E\left[\log\left(\frac{d\mu_{\xi,Y}}{d(\mu_\xi\times\mu_W)}\left(\xi,Y\right) \right) \right] - \E\left[\log \left(\frac{d\mu_{Y}}{d\mu_W} \left(Y\right) \right)\right] \nonumber \\
			&=\D{\mu_{\xi,Y}}{\mu_\xi \times \mu_W } - \D{\mu_{Y}}{\mu_W}.
		\end{align}
		Similarly, for each $n\in\naturalnumber$, we have
		\begin{equation}
			\label{eq_split}
			I_T\left(\xi;Y^{(n)}\right) =\D{\mu_{\xi,Y^{(n)}}}{\mu_\xi \times \mu_W } - \D{\mu_{Y^{(n)}}}{\mu_W}.
		\end{equation}
		Then, by 2) of~\autoref{thm1}, we have
		\begin{equation}
			\lim_{n\to\infty} \D{\mu_{Y^{(n)}}}{\mu_W} = \D{\mu_Y}{\mu_W}.
		\end{equation}
		Hence, to prove (\ref{eq_result_mi}), it remains to show
		\begin{equation}
			\label{eq_step_mi}
			\lim_{n\to\infty} \D{\mu_{\xi,Y^{(n)}}}{\mu_W} = \D{\mu_{\xi,Y}}{\mu_W}.
		\end{equation}
		Towards this goal, applying Proposition \ref{pp_girs2}, we have
		\begin{align}
			\frac{d\mu_{\xi,Y^{(n)}}}{d\left(\mu_\xi\times\mu_W \right)} \left(\xi,Y^{(n)}\right) &= \exp\left( \int_0^T \beta^{(n)}(t)dY^{(n)}(t)-\frac{1}{2} \int_0^T \left(\beta^{(n)}(t) \right)^2 dt   \right) \nonumber \\
			&\hspace{-3cm} =\exp\left( \int_0^T \beta^{(n)}(t)dW(t) + \int_0^T \beta^{(n)}(t)f^{(n-1)}(t) dt -\frac{1}{2}\int_0^T\left(\beta^{(n)}(t) \right)^2 dt   \right),
		\end{align}
		and furthermore,
		\begin{equation}
			\frac{d\mu_{\xi,Y}}{d\left(\mu_\xi\times \mu_W \right)}(\xi,Y) = \exp\left(\int_0^T f(t,\xi,Y)dY(t) - \frac{1}{2} \int_0^T f^2(t,\xi,Y) dt  \right).
		\end{equation}
		Here,
$
\beta^{(n)}(t) = \E\left[f^{(n-1)}(t)  \big| \filtration{\xi,Y^{(n)}}{t}  \right].$
Now, with a parallel argument as in the proof of Theorem~\ref{thm1}, we reach
		\begin{multline}
			\label{eq_mi_step1_final}
			\E\left[\left| \log\left(\frac{d\mu_{\xi,Y^{(n)}}}{d\left(\mu_\xi\times\mu_W \right)} \left(\xi,Y^{(n)}\right)  \right)   - \log \left(\frac{d\mu_{\xi,Y}}{d(\mu_\xi\times\mu_W)}\left(\xi, Y\right) \right)  \right|    \right] \\ \leq 4 (c_1KT)^{1/2} \sqrt{\frac{(c_2T)^{n}}{n!}} + 4 (c_1KT)^{1/2} \sqrt{\frac{(c_2T)^{n-1}}{(n-1)!}} + 2c_1KT \frac{(c_2T)^{n}}{n!},
		\end{multline}
		which implies (\ref{eq_step_mi}) and hence (\ref{eq_result_mi}). In addition, if $f$ satisfies the condition (\textbf{B}), we can combine (\ref{eq_mi_step1_final}) and (\ref{eq_relative_entropy_estimate}) to establish (\ref{eq_result_mi2}).
	\end{proof}
	
\begin{remark}
Effective computation of the mutual information of a continuous-time Gaussian feedback channel under various input constraints has been a long open problem in information theory (see, e.g.,~\cite{Shannon, Ducan, ka71, Snyders, zakai}). Evidently, Theorem~\ref{pp_mi} suggests a solution via Picard's iteration, which, thanks to the rate of convergence in (\ref{convergence-rate}), can be particularly promising for the channel under the condition (\textbf{B}), which is often termed as ``peak power constraint'' (see, e.g.,~\cite{ozarow}) in information theory.
\end{remark}

	\section{Appendices} \appendix
	
	\section{Proof of Theorem~\ref{thm_existence_uniqueness}} \label{proof_thm_existence_uniqueness}
	
	The proof closely follows those showing the existence and uniqueness of the solution to some ``conventional'' SDEs where $\xi$ is absent (see, e.g.,~\cite{BMSC, Mao}). In the following we only sketch the proof and emphasize the technical issues only present in our setting.
	
	The uniqueness of the solution can be proven using Gronwall's inequality in a parallel fashion as in~\cite{BMSC,Mao}, so here we only prove the existence. More precisely, we achieve this by establishing (\ref{eq_square_bound_sq}), which in turn implies the well-definedness of the limit of the sequence $\left\{Y^{(n)}\right\}$.
	
	To this end, we first note that for any $n \in \naturalnumber_0$ and any $t \in [0,T]$,
	\begin{align}
		\sup_{0\leq s\leq t}\left|Y^{(n)}(s)\right|^2 &  \leq  2\sup_{0\leq s\leq t}\left(\int_0^s f\left(u,\xi,Y^{(n-1)}\right)du \right)^2 + 2\sup_{0\leq s\leq t}\left( \int_0^s g\left(u,Y^{(n-1)}\right)dW(u) \right)^2 \nonumber \\
		&\hspace{-1cm} \leq  2t \int_0^t f^2\left(u,\xi,Y^{(n-1)}\right)du + 2\sup_{0\leq s\leq t}\left( \int_0^s g\left(u,Y^{(n-1)}\right)dW(u) \right)^2, \label{Yns}
	\end{align}
	where the last inequality follows from the Cauchy-Schwarz inequality. Taking expectation on both sides of (\ref{Yns}) then yields
	\begin{align}
		\E\left[\sup_{0 \leq s \leq t}\left|Y^{(n)}(s) \right|^2 \right] & \leq 2LT\left(T+T\E\left[\|\xi\|_T^2\right]+ \int_0^t \E\left[\sup_{0 \leq v \leq u}\left|Y^{(n-1)}(v)\right|^2 \right]du\right) \nonumber \\
                            & \quad \quad+ 2 \E\left[\sup_{0\leq s\leq t}\left( \int_0^s g\left(u,Y^{(n-1)}\right)dW(u) \right)^2\right]. \label{EYns}
	\end{align}

	For the second term in (\ref{EYns}), by Doob's submartingale inequality and the It\^o isometry, we have
	\begin{align}
		\E\left[\sup_{0\leq s\leq t} \left|  \int_0^s g\left(u,\xi,Y^{(n-1)}\right)dW(u)\right|^2 \right] &\leq 2^2  \E\left[  \int_0^t g^2\left(u,\xi,Y^{(n-1)}\right)du  \right] \nonumber \\
		&\hspace{-1cm} \leq 4 L \left( T+T\E\left[\|\xi\|_T^2\right] + \int_0^t \E\left[\sup_{0 \leq v \leq u}\left|Y^{(n-1)}(u)\right| \right]du \right).
	\end{align}
	Now, letting $k_1 := 2LT(T+4)\left(1+\E\left[\|\xi\|_T^2\right] \right) $ and $k_2 := 2L(T+4)$, we have
	$$
	\E\left[\sup_{0 \leq s \leq t}\left|Y^{(n)}(s) \right|^2 \right] \leq k_1 + k_2 \int_0^t \E\left[\sup_{0 \leq v \leq u}\left|Y^{(n-1)}(v)\right|^2 \right]du,
	$$
	a recursive application of which yields
	\begin{equation}
		\E\left[\sup_{0 \leq s \leq t}\left|Y^{(n)}(s) \right|^2 \right] \leq k_1e^{k_2t}.
	\end{equation}
	
	Next, using a similar approach as above and taking into account the fact that $f$ and $g$ satisfy (\textbf{L}), we reach, for any $t \in [0,T]$,
	\begin{align} \label{eq_existence_uniqueness_c_split}
		\hspace{-1cm} \E\left[\sup_{0 \leq s \leq t}\left|Y^{(1)}(s) - Y^{(0)}(s) \right|^2\right] & \leq 2t \int_0^t \E\left[f^2\left(u,\xi,Y^{(0)}\right) \right]du + 2 \E\left[\sup_{0\leq s\leq t} \left|  \int_0^s g\left(u,Y^{(0)}\right)dW(u)\right|^2 \right] \nonumber \\
		&\hspace{-2cm} \leq 2T^2L \left(1+\E\left[\|\xi\|_T^2 \right] \right) + 2\E\left[\sup_{0\leq s\leq t} \left|  \int_0^s g\left(u,Y^{(0)}\right)dW(u)\right|^2 \right].
	\end{align}
	Again, by Doob's submartingale inequality and the It\^o isometry,
	\begin{align} \label{missing}
		\E\left[\sup_{0\leq s\leq t} \left|  \int_0^s g\left(u,Y^{(0)}\right)dW(u)\right|^2 \right] &\leq 2^2  \E\left[  \int_0^t g^2\left(u,Y^{(0)}\right)du  \right] \nonumber \\
		&\leq 4 LT \left( 1+\E\left[\|\xi\|_T^2\right]  \right).
	\end{align}
	Hence, from (\ref{eq_existence_uniqueness_c_split}) and (\ref{missing}), we have
	\begin{equation}
		\label{eq_existence_uniqueness_c}
		\E\left[\sup_{0 \leq s \leq t} \left| Y^{(1)}(s)-Y^{(0)}(s)\right|^2 \right] \leq 2TL(T+4) \left( 1+\E\left[\|\xi\|_T^2\right]\right) =:c_1,
	\end{equation}
	for any $t\in[0,T]$. It then follows from the fact that $f$ and $g$ satisfy (\textbf{L}) that
	\begin{align*}
		\E\left[\sup_{0 \leq s \leq t} \left| Y^{(2)}(s)-Y^{(1)}(s)\right|^2 \right] & \leq 2\E\left[\left|\int_0^t \left(f\left(u,\xi,Y^{(1)}\right) - f\left(u,\xi,Y^{(0)}\right) \right)du \right|^2\right] \\
		& \hspace{0.5cm} + 8 \E\left[ \left| \int_0^t \left(g\left(u,\xi,Y^{(1)}\right) - g\left(u,\xi,Y^{(0)}\right) \right) dW(u)  \right|^2 \right]\\
		&\leq 2K(T+4)\int_0^t \E\left[\sup_{0\leq v\leq u}\left| Y^{(1)}(v)-Y^{(0)}(v) \right|^2  \right]du \\
		&\leq c_1c_2t,
	\end{align*}
	where $c_2 := 2K(T+4)$. Now, by way of induction, one can readily show that
	\begin{equation}
		\label{eq_exist_unique_cauchy}
		\E\left[\sup_{0 \leq s \leq t}\left| Y^{(n+1)}(t)-Y^{(n)}(t)\right|^2 \right] \leq c_1 \frac{(c_2t)^n}{n!},
	\end{equation}
	which can be used to establish
	$$
	\E\left[\left\|Y^{(n)} \right\|_T^2\right] \leq k_1e^{k_2T},
	$$
	and thereby completing the proof of (\ref{eq_exist_unique_bound_picard}). Consequently, the sequence $\{Y^{(n)}\}_{n=0}^\infty$ is Cauchy in $L^2(\Omega,\mathcal{F},\p)$ with the limit $Y=\{Y(t),\mathcal{F};0\leq t\leq T\}$. Furthermore, by Fatou's lemma, the limit $Y$ satisfies
	\begin{equation}
		\E\left[\|Y\|^2_T \right] \leq \liminf_{n\to\infty} \E\left[\left\| Y^{(n)}\right\|_T^2 \right] \leq k_1e^{k_2T}.
	\end{equation}
	We are now ready to establish (\ref{eq_square_bound_sq}). To this end, note that for any $t\in[0,T]$, we have
	$$
	\E\left[\sup_{0 \leq s \leq t}\left| Y^{(n)}(s)-Y(s)\right|^2 \right] \leq c_2 \int_0^t \E\left[\sup_{0 \leq v \leq u}\left|Y^{(n-1)}(v)-Y(v)\right|^2 \right]du,
	$$
	which, upon a sequence of recursive substitutions, yields that
	\begin{equation} \label{eq_exist_unique_nandlimit}
		\E\left[\sup_{0 \leq s \leq t}\left| Y^{(n)}(s)-Y(s)\right|^2 \right] \leq \E\left[\|Y\|^2_T \right] \frac{(c_2 t)^n}{n!},
	\end{equation}
	establishing the $L^2$ convergence of $\left\{Y^{(n)} \right\}_{n=0}^\infty$. To establish that $Y$ is the uniform pointwise limit of the sequence $\left\{Y^{(n)} \right\}_{n=0}^\infty$ to complete the proof of (\ref{eq_square_bound_sq}), one can use the estimate in (\ref{eq_exist_unique_cauchy}) and the Borel-Cantelli lemma; moreover, together with the estimate in (\ref{eq_exist_unique_nandlimit}), one can show that $Y$ indeed satisfies (\ref{eq_sde}). These assertions can be shown in the same way as in \cite{BMSC,Mao}, and thereby we omit the details thereof.
	
	\begin{remark}
		\label{remark_appendix}
		For the special case that $g \equiv 1$ and $f$ satisfies (\textbf{B}), $c_1$ can be replaced by $\tilde{c}_1:= 2T(M+4)$, a constant that is independent of $\xi$, which can be justified by noting
		\begin{equation}
			\E\left[\sup_{0\leq s\leq t}\left|Y^{(1)}(s) - Y^{(0)}(s) \right|^2\right] \leq 2t\E\left[\int_0^t f^2(u,\xi, Y^{(0)})du \right] + 8T \leq 2T(M+4) = \tilde{c}_1.
		\end{equation}
		It can be verified that, via a similar argument, the constants $k_1$ and $c_3$ can also be respectively replaced by some constants that are independent of $\xi$.
	\end{remark}
	
	\section{Proof of Proposition~\ref{lemma_novikov}} \label{proof_lemma_novikov}
	
	By~\cite[Theorem 7.1]{liptser}, it suffices to establish
	\begin{equation} \label{eq_novikov}
		\E\left[\exp\left(-\int_0^T f^{(n)}(t)dW(t) - \frac{1}{2}\int_0^T \left(f^{(n)}(t) \right)^2 dt\right)  \right]=1, \ \text{for all } n\in\naturalnumber_0.
	\end{equation}		
	In the meantime, to prove (\ref{eq_novikov}), by the well-known Novikov's condition~\cite{BMSC}, it suffices to construct, for each $n\in\naturalnumber$, a sequence $\{t^{(n)}_m\}_{m=0}^{M_n}$ with $0=t^{(n)}_0<t^{(n)}_1<\dots<t^{(n)}_{M_n}=T$, such that for any $m = 1, 2, \dots, M_n$,
	\begin{equation}
		\E\left[ \exp\left(\int_{t^{(n)}_{m-1}}^{t^{(n)}_m} \left(f^{(n)}(t)\right)^2dt  \right)   \right] <\infty.
	\end{equation}				
	To this end, we first note that it follows from (\textbf{L}) that, for each $n\in\naturalnumber_0$,
	\begin{align*}
		\left\|Y^{(n+1)}\right\|_T^2 &\leq 2\|W\|_T^2 + 2\int_0^T L\left(1+ \sup_{0 \leq s \leq t}|\xi(s) |^2+\sup_{0 \leq s \leq t}|Y^{(n)}(s)|^2 \right)dt\\
		&\leq 2\|W\|_T^2 + 2LT(1+\|\xi \|_T^2 ) + 2 L\int_0^T\sup_{0 \leq s \leq t}|Y^{(n)}(s)|^2dt,
	\end{align*}
	a recursive application of which yields
	\begin{align*}
		\left\|Y^{(n+1)}\right\|_T^2&\leq 2C_{n+1}\|W\|_T^2 + 2LTC_{n+1}(1+\|\xi \|_T^2),
	\end{align*}
	where $C_{n+1} =\frac{(2L)^{n+1}-1}{2L-1}$. 	
	Then, with the linear growth assumption (\textbf{G}), we derive
	\begin{align*}
		\E\left[ \exp\left(\frac{1}{2}\int_{t^{(n)}_{m-1}}^{t^{(n)}_m} \left(f^{(n)}(t)\right)^2dt  \right)  \right] &\leq 	\E\left[ \exp\left(\frac{1}{2} L\left(t^{(n)}_{m}-t^{(n)}_{m-1} \right)\left(1+\|\xi \|_T^2+  \| Y^{(n)} \|_T^2\right)    \right)      \right]\\
		&\leq e^{LTD_{n,m}}\E\left[e^{ LTD_{n,m} \|\xi\|_T^2}  \right] \E\left[e^{D_{n,m}\|W\|_T^2}   \right],
	\end{align*}
	where $D_{n,m}=LC_n\left(t^{(n)}_{m}-t^{(n)}_{m-1}\right) $. Now, applying Doob's submartingale inequality, we conclude that, as long as $\max_{m\in\{1,2,\dots,M_n \}}\{t^{(n)}_m-t^{(n)}_{m-1} \} \leq \frac{\varepsilon}{L^2C_nT}$,
	\begin{equation*}	
		\E\left[ \exp\left(\frac{1}{2}\int_{t^{(n)}_{m-1}}^{t^{(n)}_m} \left(f^{(n)}(t)\right)^2dt  \right)  \right]\leq 	4e^{LTD_{n,m}}\E\left[e^{ LTD_{n,m} \|\xi\|_T^2}  \right]\E\left[e^{D_{n,m}W(T)^2}  \right]<\infty,
	\end{equation*}
	which immediately implies the proposition.
	
	\section{Proof of Proposition \ref{pp_girs2}} \label{proof_pp_girs2}
	
	Fix $x \in C[0, T]$ and consider the process $Z^x$ satisfying
	$$
	Z^x(t) = \int_0^th(s,x,W)ds + W(t) ,\ 0\leq t\leq T.
	$$
	Using similar arguments as in Section~\ref{useful-relations}, we infer that there exists a non-anticipative functional $Q:[0,T]\times C[0,T]\to\real$, such that for any $t \in[0,T]$,
	$$
	Z(t) = Q(t,\xi,W) \mbox{ and } Z^x(t) = Q(t,x,W).
	$$
	Note that~\cite[Theorem 7.14]{liptser}, together with the condition (\ref{eq_cond_girs2}), implies that that $\mu_Z \ll \mu_W$ and $\mu_{Z^x} \ll \mu_W$, and furthermore,
	$$
	\frac{d\mu_{\xi,Z}}{d(\mu_\xi\times\mu_W)}(x,w) = \frac{d\mu_{Z^x}}{d\mu_W}(w) \quad \mu_\xi\times\mu_W \as,
	$$
	and
	$$
	\frac{d\mu_{\xi,Z}}{d(\mu_\xi\times\mu_W)}(x,W)= \frac{d\mu_{Z^x}}{d\mu_W}(W) = \exp\left( \int_0^T \gamma(t,x,W) dW(t) - \frac{1}{2} \int_0^T \gamma^2(t,x,W) dt \right),
	$$
	where $\gamma: [0,T]\times C[0,T]\to\real$ is a non-anticipative functional such that
$$
\gamma(t,x,Z^x) = \gamma(t,x,Q(\cdot,x,W)) = \E\left[ h(t,x,W) \big| \filtration{Z^x}{t} \right].
$$ Since $\mu_Z \ll \mu_W$, by \cite[Lemma 4.10]{liptser}, we have
	$$
	\frac{d\mu_{\xi,Z}}{d(\mu_\xi\times\mu_W)}(\xi,Z) = \exp\left(\int_0^T \gamma(t,\xi,Z) dZ(t) - \frac{1}{2}\int_0^T\gamma^2(t,\xi,Z) dt \right).
	$$
	
	We now claim that for each $t\in[0,T]$, $\gamma(t,\xi, Z) =\E\left[h(t,\xi,W) \big| \filtration{\xi,Z}{t} \right] \ \p\as$, which will imply the proposition. Indeed, for any bounded, $\filtration{\xi,Z}{t}$-measurable random variable $\lambda(\xi,Z)$, we have
	\begin{align*}
		\E\left[\lambda(\xi,Z) \gamma(t,\xi,Z)  \right] &= \E\left[ \lambda(\xi,Q(t,\xi,W)) \gamma(t,\xi,Q(t,\xi,W)) \right]\\
		&= \int_{C[0, T]} \int_{C[0, T]} \lambda(x, Q(t,x,w)) \gamma(t,x, Q(x,w)) \mu_W(dw) \times \mu_\xi(dx)\\
		&= \int_{C[0, T]} \int_{C[0, T]} \lambda(x, Q(t,x,w)) h(t,x,w) \mu_W(dw) \times \mu_\xi(dx)\\
		&= \E\left[\lambda(\xi,Z) h(t,\xi,W) \right],
	\end{align*}
	where the second equality follows from the independence of $\xi$ and $W$, and the third one follows from the fact that $\gamma(t,x,Q(x,W)) = \E\left[ h(t,x,W) \big| \filtration{Z^x}{t} \right]$. The proof of the claim is then completed by noting that $\gamma(t,\xi,Z)$ is $\filtration{\xi,Z}{t}$-measurable.	
	
	\section{Proof of Proposition~\ref{lemma_relative_entropy_convex}} \label{convexity-lemma}
	
	We will need the following lemma, which generalizes the well-known log-sum inequality for discrete probability distributions (see, e.g.,~\cite{CoverThomas}).		
	\begin{lemma} \label{lemma_relative_entropy_1}
		Let $f,g$ be positive measurable functions defined on a probability space $(E,\mathcal{E},\mu)$ such that
		$$
		\mu(f), \; \mu(g), \; \mu(f \log f), \; \mu(f \log g) < \infty,
		$$
		where $\mu(\cdot)$ means the integral with respect to $\mu$. Then, we have
		\begin{equation} \label{eq_relative_entropy_lemma}
			\int_E f(x) \log\frac{f(x)}{g(x)} \mu(dx) \geq \mu(f) \log \frac{\mu(f)}{\mu(g)}.
		\end{equation}
	\end{lemma}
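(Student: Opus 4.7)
The plan is to reduce the claim to a single application of Jensen's inequality for the strictly convex function $\phi(t) := t \log t$ (with the convention $\phi(0) := 0$). First, I would dispose of the edge case $\mu(f) = 0$: here $f = 0$ $\mu$-a.e., both sides of (\ref{eq_relative_entropy_lemma}) vanish under the standard convention, and the inequality holds trivially. So henceforth assume $\mu(f), \mu(g) > 0$.

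Since $g$ is strictly positive and $\mu$-integrable, I would introduce the auxiliary probability measure $\nu$ on $(E, \mathcal{E})$ defined by
$$
d\nu \;:=\; \frac{g}{\mu(g)}\, d\mu.
$$
Under this change of measure, the left-hand side of (\ref{eq_relative_entropy_lemma}) rewrites cleanly as
$$
\int_E f(x) \log \frac{f(x)}{g(x)}\, \mu(dx) \;=\; \mu(g)\,\int_E \phi\!\left(\frac{f(x)}{g(x)}\right) \nu(dx).
$$
Computing the $\nu$-mean of $f/g$ gives $\int_E (f/g)\, d\nu = \mu(f)/\mu(g)$, which is finite. Applying Jensen's inequality to $\phi$ under $\nu$ then yields
$$
\int_E \phi\!\left(\frac{f}{g}\right) d\nu \;\geq\; \phi\!\left(\int_E \frac{f}{g}\, d\nu\right) \;=\; \frac{\mu(f)}{\mu(g)} \log \frac{\mu(f)}{\mu(g)},
$$
and multiplying through by $\mu(g)$ delivers the desired bound.

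The only real technical point is the integrability bookkeeping needed to apply Jensen's inequality: one must know that $\phi(f/g)$ is $\nu$-integrable (so that the inequality is not a vacuous $+\infty \geq \text{finite}$ statement, and more importantly so that the negative part is under control). A direct computation gives
$$
\int_E \phi(f/g)\, d\nu \;=\; \frac{1}{\mu(g)}\bigl(\mu(f \log f) - \mu(f \log g)\bigr),
$$
which is finite exactly because the hypotheses of the lemma provide $\mu(f \log f), \mu(f \log g) < \infty$. Thus the four finiteness assumptions are tailored precisely to legitimize the change of measure and Jensen's step, and I do not anticipate any obstacle beyond recording this verification.
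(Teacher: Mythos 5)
Your proof is correct, and it takes a mildly different route from the paper's. The paper normalizes \emph{both} $f$ and $g$ into probability measures $\nu$ and $\kappa$ (with $d\nu = (f/\mu(f))\,d\mu$, $d\kappa = (g/\mu(g))\,d\mu$) and then invokes the nonnegativity of relative entropy $\D{\nu}{\kappa} \geq 0$ as a known fact, expanding that expression to read off the log-sum inequality. You instead tilt by $g$ alone, setting $d\nu = (g/\mu(g))\,d\mu$, and apply Jensen's inequality directly to the convex function $\phi(t)=t\log t$ evaluated at $f/g$, which need not integrate to one under $\nu$. Under the hood the two arguments are the same Jensen step in different clothing (nonnegativity of relative entropy \emph{is} Jensen for $t\log t$), so the gain is mostly presentational: your version is self-contained and avoids invoking $\D{\cdot}{\cdot}\geq 0$ as a black box, while the paper's version avoids having to discuss $\phi$ explicitly. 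Your integrability accounting (showing $\int \phi(f/g)\,d\nu = (\mu(f\log f) - \mu(f\log g))/\mu(g)$ is finite from the hypotheses) is exactly the point one needs to make Jensen legitimate here and matches the role the finiteness assumptions play in the paper.
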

	
	\begin{proof}
		We define two probability measures $\nu, \kappa$ as follows: for any $A \in \mathcal{E}$,
		$$
        \nu(A) := \int_A \frac{f(x)}{\mu(f)} \mu(dx) \quad \text{and} \quad \kappa(A):= \int_A \frac{g(x)}{\mu(g)} \mu(dx).
        $$
		Since $f/g >0$, we have $\nu \ll \kappa$ with $d\nu/d\kappa = (\mu(f)/\mu(g))^{-1} f/g$. Therefore, by the well-known fact that relative entropy is non-negative~\cite{ITCS}, we have
		\begin{align*}
			0  \leq \D{\nu}{\kappa} = \int_E \log \frac{d\nu}{d\kappa}(x) \nu(dx) &= \int_E \frac{d\nu}{d\mu}(x) \log \frac{d\nu}{d\kappa}(x) \mu(dx)  \\
			&= \mu(f)^{-1} \int_E f(x) \log \frac{f(x)}{g(x)} \mu(dx) - \log \frac{\mu(f)}{\mu(g)},
		\end{align*}
establishing the lemma.
	\end{proof}
	
	\begin{proof}[Proof of Lemma~\ref{lemma_relative_entropy_convex}]
		First of all, we define
$$
f(w,z) := \dfrac{d\mu_{U,Z}}{d\left(\mu_V \times \mu_Z \right)}(w,z), \mbox{ and } g(w,z) := \dfrac{d\mu_{{V,Z}}}{d\left(\mu_V \times \mu_Z \right)}(w,z).
$$
Then, by Lemma~\ref{lemma_relative_entropy_1}, we have, for each $w \in E$,
		\begin{equation} \label{eq_relative_entropy_convex}
			\int_E f(w,z) \log \frac{f(w,z)}{g(w,z)} \mu_Z(dz) \geq \mu_Z(f(w,\cdot)) \log \frac{\mu_Z(f(w,\cdot))}{\mu_Z(g(w,\cdot))},
		\end{equation}
		where
		$$
		\mu_Z(f(w,\cdot)) = \int_E\frac{d\mu_{U,Z}}{d\left(\mu_V \times \mu_Z \right)}(w,z) \mu_Z(dz) = \density{U}{V}{w}
		$$
		and
		$$
		\mu_Z(g(w,\cdot)) = \int_E\frac{d\mu_{V,Z}}{d\left(\mu_V \times \mu_Z \right)}(w,z)\mu_Z(dz) = 1.
		$$
		Finally, integrating both sides of (\ref{eq_relative_entropy_convex}) with respect to the measure $\mu_V$ yields the desired result.
	\end{proof}
	
	\section{Proof of Lemma~\ref{pp_ui}} \label{proof_pp_ui}
	
	By~\cite[Theorem 7.1]{liptser}, we have
	\begin{equation} \label{eq_cond_proof_bound}
		\hspace{-1cm} \frac{d\mu_{Y^{(n+1)}}}{d\mu_W}(Y^{(n+1)}) = \frac{1}{\E\left[\exp\left( -\int_0^T f^{(n)}(t)dY^{(n+1)}(t) + \frac{1}{2} \int_0^T \left(f^{(n)}(t)\right)^2dt \right)\Big| \filtration{Y^{(n+1)}}{T}\right]}.
	\end{equation}
	Then, applying Jensen's inequality, we establish (\ref{bound-1}) as follows:
	{\small \begin{align*}
			\hspace{-1cm} \E\left[\left(\frac{d\mu_{Y^{(n+1)}}}{d\mu_W}(Y^{(n+1)})\right)^p\right] &=\E\left[ \frac{1}{\E^p\left[\exp\left( -\int_0^T f^{(n)}(t)dY^{(n+1)}(t) + \frac{1}{2} \int_0^T \left(f^{(n)}(t)\right)^2dt \right)\Big| \filtration{Y^{(n+1)}}{T} \right]}\right]\\
			&\leq \E\left[ \exp\left( p\int_0^Tf^{(n)}(t)dY^{(n+1)}(t)-\frac{p}{2}\int_0^T\left(f^{(n)}(t)  \right)^2dt  \right)    \right]\\
			&=\E\left[ \exp\left( p\int_0^Tf^{(n)}(t)dW(t)-\frac{p^2}{2}\int_0^T\left(f^{(n)}(t)  \right)^2dt +\frac{p(p+1)}{2} \int_0^T\left(f^{(n)}(t)  \right)^2dt \right)    \right]\\
			&\leq e^{\frac{p(p+1)M}{2}}\E\left[ \exp\left( p\int_0^Tf^{(n)}(t)dW(t)-\frac{p^2}{2}\int_0^T\left(f^{(n)}(t)  \right)^2dt \right)    \right]\\
			&=e^{\frac{p(p+1)M}{2}},
	\end{align*}}
	where the last equality follows from the martingale property of the process
	$$
	\left\{\exp\left(p\int_0^tf^{(n)}(s)dW(s)-\frac{p^2}{2}\int_0^t\left(f^{(n)}(s)  \right)^2ds  \right), \ 0\leq t\leq T\right\}.
	$$
	A parallel argument can be used to establish (\ref{bound-2}).


\begin{thebibliography}{9}
		\bibitem{ChenJiaWang}
        Chen, Y., Jia, L. and Wang, J. (2004). \textit{Machine Learning and Statistical Modeling Approaches to Image Retrieval}. Springer, Boston, MA.

        \bibitem{Bishop}
        Bishop, C. (2006). {\em Pattern Recognition and Machine Learning}. Springer, New York.

        \bibitem{CoverThomas}
        Cover, T and Thomas, J. (2006). {\em Elements of Information Theory}, 2nd ed., Wiley Interscience, New York.

        \bibitem{CsiszarKorner}
        Csiszar, I. and Korner, J. (2011). \textit{Information Theory: Coding Theorems for Discrete Memoryless Systems}. Cambridge University Press, Cambridge.
		
		\bibitem{Ducan}
		Duncan, T.~E. (1970). On the calculation of the mutual information. \textit{SIAM Journal on Applied Mathematics}, vol. 19(1), 215-220.
		
		\bibitem{PTE}
		Durrett, R. (2013). \textit{Probability: Theory and Examples}, 4th edition. Cambridge Series in Statistical
		and Probabilistic Mathematics, Cambridge University Press, Cambridge.
		
		\bibitem{Hobson}
        Hobson, A. (1971). Concepts in statistical mechanics. Gordon and Breach, New York.
		
		\bibitem{ITCS}
		Ihara, S. (1993). \textit{Information Theory for Continuous Systems}. World Scientific, Singapore.
		
		\bibitem{Oliver}
        Johnson, O. (2004). \textit{Information theory and the central limit theorem}. Imperial College Press, London, 2004.

        \bibitem{ka71}
        Kadota, T., Zakai, M. and Ziv, J. (1971).
        Mutual information of the white Gaussian channel with and without feedback. {\em IEEE Trans. Info. Theory}, vol. 17, pp. 368-371, 1971.

 		\bibitem{BMSC}
		Karatzas, I. and Shreve, S.~E. (1998). \textit{Brownian Motion and Stochastic Calculus}. Springer Verlag, New York.

        \bibitem{Kullback}
        Kullback, S. (1959). \textit{Information Theory and Statistics}. Wiley, New York.
		
		\bibitem{liptser}
		Liptser, R.~S. and Shiryayev, A.~N. (1977). \textit{Statistics of Random Process I: General Theory}. Springer Verlag, New York.
		
		
        \bibitem{MacKay}
        MacKay, D. (2003). {\em Information Theory, Inference, and Learning Algorithms}. Cambridge University Press, Cambridge, UK.
		
		\bibitem{Mao}
		Mao, X. (2007). \textit{Stochastic Differential Equations and Applications}. Woodhead Publishing, Cambridge.
		
		\bibitem{zakai}
		Mayer-Wolf, E. and Zakai, M. (2007). Some relations between mutual information and estimation error in Wiener space. \textit{The Annals of Applied Probability}, vol. 17, no. 3, 1102-1116.

        \bibitem{cliffs}
        Michie, D., Spiegelhalter, J. and Taylor, C. (1994). \textit{Machine learning, neural and statistical classification}. Prentice Hall, Englewood Cliffs, NJ.

		\bibitem{ozarow}
		Ozarow, L.~H., Wyner, A.D. and Ziv, J. (1988). Achievable rates for a constrained Gaussian channel. \textit{IEEE Transactions on Information Theory}, vol. 34, no.3, 365-370.
		
		\bibitem{Shannon}
		Shannon, C.~E. (1948). A mathematical theory of communication. \textit{Bell System Technical Journal}, vol. 27, 379-423, 623-656.

        \bibitem{Snyders}
        Snyders, J. (1972). Error formulae for optimal linear filtering, prediction and interpolation of stationary time series. {\em Ann. Math. Statist.}, vol. 43,  pp. 1935-1943.

        \bibitem{Tribus}
        Tribus, M. (1961). {\em Thermostatics and Thermodynamics: An Introduction to Energy, Information and States of Matter, with Engineering Applications}, Van Nostrand, Princeton, NJ.

        \bibitem{Tsybakov}
        Tsybakov, A. (2009). {\em Introduction to Nonparametric Estimation}. Springer New York, New York, NY.

        \bibitem{Villani2003}
        Villani, C. (2003). \textit{Topics in optimal transportation}. American Mathematical Society, Providence, RI.

        \bibitem{Villani2009}
        Villani, C. (2009). \textit{Optimal Transport: Old and New}. Springer, Berlin Heidelberg.

	\end{thebibliography}
\end{document}